\newtheorem{theorem}{Theorem}
\newtheorem{lemma}[theorem]{Lemma}
\newtheorem{proposition}[theorem]{Proposition}
\newtheorem{conjecture}[theorem]{Conjecture}
\def\tc{total coloring\xspace}
\def\tcs{total colorings\xspace}
\def\cec{cyclically $\Delta$-edge-connected\xspace}
\def\soft#1{\leavevmode\setbox0=\hbox{h}\dimen7=\ht0\advance
    \dimen7 by-1ex\relax\if t#1\relax\rlap{\raise.6\dimen7
    \hbox{\kern.3ex\char'47}}#1\relax\else\if T#1\relax
    \rlap{\raise.5\dimen7\hbox{\kern1.3ex\char'47}}#1\relax
    \else\if d#1\relax\rlap{\raise.5\dimen7\hbox{\kern.9ex
    \char'47}}#1\relax\else\if D#1\relax\rlap{\raise.5\dimen7
    \hbox{\kern1.4ex\char'47}}#1\relax\else\if l#1\relax
    \rlap{\raise.5\dimen7\hbox{\kern.4ex\char'47}}#1\relax
    \else\if L#1\relax\rlap{\raise.5\dimen7\hbox{\kern.7ex
    \char'47}}#1\relax\else\message{accent \string\soft
    \space #1 not defined!}#1\relax\fi\fi\fi\fi\fi\fi}
\begin{document}
\title{The last fraction of a fractional conjecture}
\author{Franti\v sek Kardo\v s\thanks{%
Institute of Mathematics, Faculty of Science, University of Pavol Jozef
\v{S}af\' arik, Jesenn\'a 5, 041 54 Ko\v{s}ice, Slovakia, and Institute
for Theoretical Computer Science, Faculty of Mathematics and Physics,
Charles University, Prague, Czech Republic. Partially supported by the
Slovak Science and Technology Assistance Agency under the contract No
APVV-0007-07. 
E-mail: \texttt{frantisek.kardos@upjs.sk}.}
\and
Daniel Kr{\'a}l'\thanks{%
Institute for Theoretical Computer Science, Faculty of Mathematics and
Physics, Charles University, Malostransk{\'e} n{\'a}m{\v e}st{\'\i} 25,
118 00 Prague, Czech Republic. E-mail: \texttt{kral@kam.mff.cuni.cz}.
The Institute for Theoretical Computer Science (ITI) is supported by
Ministry of Education of the Czech Republic	as project 1M0545.  This
research has been supported by grant GACR 201/09/0197.}
\and
Jean-S{\'e}bastien Sereni\thanks{%
CNRS (LIAFA, Universit\'e Denis Diderot), Paris, France, and Department
of Applied Mathematics (KAM), Faculty of Mathematics and Physics,
Charles University, Prague, Czech Republic.
E-mail: \texttt{sereni@kam.mff.cuni.cz}.}
}
\date{}	
\maketitle
\begin{abstract}
Reed conjectured that for every $\varepsilon>0$ and every integer
$\Delta$,
there exists $g$ such that the fractional total chromatic number of
every graph with maximum degree $\Delta$ and girth at least $g$ is at
most $\Delta+1+\varepsilon$. The conjecture was proven to be
true when $\Delta=3$ or $\Delta$ is even. We settle the conjecture by proving
it for the remaining cases.
\end{abstract}

\section{Introduction}
Fractional graph theory has led to many elegant and deep
results in the last three decades, broadening the range of applications
of graph theory and providing partial results and insights to many
hard problems. In this paper, we are interested in fractional
total colorings of graphs.
Total colorings form an extensively studied topic---see the monograph by
Yap~\cite{Yap96}---with the Total Coloring Conjecture of
Behzad~\cite{Beh65} and Vizing~\cite{Viz68} being one of its grails.
\begin{conjecture}[Total Coloring Conjecture]\label{conj-tot}
The total chromatic number of every graph with maximum degree $\Delta$ is
at most $\Delta+2$.
\end{conjecture}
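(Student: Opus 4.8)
The plan is to attack Conjecture~\ref{conj-tot} through the semi-random (``nibble'') method, aiming for the kind of additive bound $\chi''(G)\le\Delta+C$ obtained by Molloy and Reed and then trying to force the constant $C$ down to $2$. I begin by translating the problem: let $T(G)$ be the total graph of $G$, with vertex set $V(G)\cup E(G)$ and an edge between two objects whenever they are incident or adjacent in $G$; a total colouring of $G$ is exactly a proper vertex colouring of $T(G)$. Since $\Delta(T(G))\le 2\Delta$, a greedy bound already gives $\chi''(G)\le 2\Delta+1$, so the entire difficulty is to exploit the local structure of $T(G)$: each of its vertices lies in at most two large cliques---the ``stars'' around the two endpoints of an edge, or the single star around a vertex---and the rest of its neighbourhood is comparatively sparse.

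Next I would run the nibble on $T(G)$ with a palette of size about $\Delta+2$: in each round assign to every still-uncoloured object a uniformly random colour, retain the assignment of an object only if it clashes with no neighbour, and then track, for every object, the random variables counting its available colours and its uncoloured neighbours. The goal of the iteration is to reach, after $O(\log\Delta)$ rounds, a configuration in which every uncoloured object has strictly more available colours than uncoloured neighbours; a greedy finish then completes the colouring. Establishing this requires showing that the relevant quantities are sharply concentrated around their expectations (via Azuma's or Talagrand's inequality) and that, in expectation, the ``deficiency''---available colours minus uncoloured degree---stays bounded away from zero throughout, including at objects sitting inside the two dense cliques attached to each edge.

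The step I expect to be the genuine obstacle, and the reason the conjecture is still open, is precisely the control of these dense cliques. The probabilistic analysis is naturally lossy: concentration arguments cost a constant additive slack, so the method delivers $\Delta+C$ for some large $C$ rather than the sharp $\Delta+2$, and near-complete graphs show that no slack at all can be afforded. A proof of the full conjecture would therefore need to marry a refined nibble on the sparse part of $T(G)$ with an essentially exact combinatorial analysis---reducible configurations and discharging, in the style used for planar graphs and for small $\Delta$---on the dense part, and so far nobody has managed to make these two halves meet. It is exactly this dense obstruction that disappears once one passes to the fractional relaxation under a large-girth hypothesis, which is why the fractional statement treated in this paper is the natural first milestone on the way to Conjecture~\ref{conj-tot}, and why it was accessible in the cases $\Delta=3$ and $\Delta$ even before the present work.
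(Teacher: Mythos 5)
This statement is not a theorem of the paper: it is the Total Coloring Conjecture, stated only as background and still open. The paper neither proves it nor claims to, so there is no ``paper proof'' for your attempt to be compared against; the most the paper offers in this direction is the Molloy--Reed bound $\Delta+10^{26}$ and the Kilakos--Reed fractional bound $\Delta+2$, both cited, not reproved. You have correctly recognised this: what you wrote is not a proof but a survey of the nibble approach together with an honest explanation of why it stalls at $\Delta+C$ rather than $\Delta+2$. That assessment is accurate as far as it goes --- the semi-random method, after iterating and invoking concentration, leaves an additive constant slack that nobody has eliminated, and the dense cliques in the total graph around each edge are exactly where the loss occurs. So there is no hidden gap to point out beyond the one you already name: the ``exact combinatorial analysis on the dense part'' you invoke at the end is not a step you carry out, it is a placeholder for the open problem itself, and no amount of refinement of the nibble alone is known to supply it. If the intent of the exercise was to reconstruct a proof appearing in the paper, the right answer here is simply that no such proof exists; the paper's actual contribution is to the fractional, large-girth relaxation (Theorem~\ref{thm-main}), which sidesteps precisely the dense obstruction you identify.
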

The most important partial result toward the Total Coloring conjecture to date is
the following theorem proved by Molloy and Reed~\cite{MoRe98} in 1998.
\begin{theorem}
The total chromatic number of every graph with maximum degree $\Delta$ is
at most $\Delta+10^{26}$.
\end{theorem}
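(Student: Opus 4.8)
The approach is the semi-random (``nibble'') method together with the Lov\'asz Local Lemma; the enormous constant $10^{26}$ is simply the cost of estimating generously rather than optimally. Two reductions come first. The \emph{total graph} $T(G)$ --- vertex set $V(G)\cup E(G)$, two elements adjacent precisely when a total colouring must separate them --- has maximum degree at most $2\Delta$, so greedy colouring gives $\chi''(G)\le 2\Delta+1$; this is already at most $\Delta+10^{26}$ whenever $\Delta<10^{26}$, so I may assume $\Delta$ large. And since $\chi''$ is monotone under taking subgraphs and every graph of maximum degree $\Delta$ embeds in a $\Delta$-regular graph, it suffices to total-colour $\Delta$-regular graphs with $\Delta+C$ colours for a suitable absolute constant $C$.

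\textbf{Why $\Delta+O(1)$ colours can suffice.} Even though $T(G)$ has maximum degree $2\Delta$, around every vertex it is far from complete in a structured way. Fix a vertex $v$ of $G$: of its $2\Delta$ neighbours in $T(G)$, the $\Delta$ incident edges already form a clique, hence use $\Delta$ distinct colours $S_v$; and if each $G$-neighbour $u$ of $v$ is only ever given a colour \emph{missing} from the edges at $u$, and the edge colouring is arranged so that the colours missing at $u$ all lie in $S_v$, then the $G$-neighbours of $v$ impose no further restriction on $v$. Symmetrically, an edge $uv$ sees the edges at $u$ and at $v$ only through their colour sets $S_u$ and $S_v$, and if these nearly coincide it too is restricted by only $(1+o(1))\Delta$ colours. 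The proof makes this heuristic rigorous, with controlled exceptions.

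\textbf{Carrying it out.} With a palette of $\Delta+C$ colours, I would (i) use the semi-random method, over several nibble rounds, to produce a proper edge colouring of $G$ using all $\Delta+C$ colours such that, writing $M_v\subseteq[\Delta+C]$ for the $C$-element set of colours missing at $v$, the ``bad'' edges $uv$ with $M_u\cap M_v\neq\emptyset$ are sparse (each vertex lies in few of them) and thin ($|M_u\cap M_v|$ small); then (ii) colour each vertex $v$ from its list $M_v$: along every non-bad edge the lists are disjoint and no conflict arises, so only the sparse bad subgraph and the small overlaps remain, and these are cleaned up by one further semi-random pass using colours kept in reserve for the vertices of high badness. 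Concentration at every step is supplied by Talagrand's and Azuma's inequalities, feeding the Local Lemma.

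\textbf{The main obstacle.} The heart of the matter is step (i): exhibiting a proper edge colouring with ``almost-disjoint missing sets''. This forces one to use the structure of $G$, not merely its maximum degree --- the naive estimates (a uniformly random proper edge colouring, or a one-shot random vertex colouring from the lists $M_v$) all land a constant factor, of order $e$, on the wrong side of the Local Lemma inequality, and it is precisely the iteration of the semi-random method that buys back that factor. One must therefore prove sharp concentration for the relevant statistics round by round --- numbers of bad neighbours, sizes of pairwise overlaps $|M_u\cap M_v|$, residual list sizes --- verify that the bad events have dependency only polynomial in $\Delta$ while their probabilities are super-polynomially small, and fix all the constants consistently so that exactly $\Delta+C$ colours survive. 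The value of $C$ is an artefact of carrying out these estimates bluntly.
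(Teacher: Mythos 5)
The paper does not prove this theorem; it is the 1998 result of Molloy and Reed, cited here without proof. Judged against their actual argument, your sketch captures the strategy correctly: reduce to large $\Delta$ and (by embedding in a $\Delta$-regular supergraph) to $\Delta$-regular graphs; observe that the $\Delta$ edges at a vertex $v$ already form a clique in the total graph, so once $v$ and its neighbours are restricted to their missing lists the only remaining vertex--vertex conflicts lie along edges $uv$ with $M_u\cap M_v\neq\emptyset$; produce, by an iterated semi-random (nibble) procedure, a proper $(\Delta+C)$-edge-colouring in which such overlaps are rare and small; and finish with the Lov\'asz Local Lemma, with Talagrand- and Azuma-type inequalities supplying the concentration round by round. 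You also put your finger on the genuine crux, namely that a one-shot random argument falls a constant factor short and the iteration is what buys that factor back; that is precisely why the semi-random method is needed rather than a single application of the Local Lemma.

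Of course the sketch defers the entire content of the theorem to step (i). Proving that the nibble delivers a proper edge colouring with almost-disjoint missing lists requires careful concentration for statistics such as $|M_u\cap M_v|$ and the number of ``bad'' neighbours, which is delicate because a single recolouring choice can cascade and the natural random variables are not trivially Lipschitz; moreover, the ``reserve'' colours invoked in your cleanup pass must also be kept off the edges incident to high-badness vertices, so the edge and vertex stages are coupled and the bookkeeping must respect that. None of this is an error in your plan --- it is an accurate outline of why Molloy and Reed's proof runs to dozens of pages with a constant as bloated as $10^{26}$ --- but a blind sketch cannot close it, and you are candid about that.
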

As is often the case, the fractional analogue of Conjecture~\ref{conj-tot} turns out
to be easier to approach. Kilakos and Reed~\cite{KiRe93}
proved in 1993 the following fractional analogue of the Total Coloring
Conjecture.
\begin{theorem}\label{thm.kire}
The fractional total coloring number of every graph with maximum degree
$\Delta$ is at most $\Delta+2$.
\end{theorem}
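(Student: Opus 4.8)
The plan is to reduce to regular graphs, recast the statement as a covering problem, and then build an explicit fractional total coloring. First I would reduce to the $\Delta$-regular case: every graph of maximum degree $\Delta$ embeds as an induced subgraph of a $\Delta$-regular graph, obtained by iteratively replacing the current graph by two disjoint copies of itself and adding, for each vertex of degree less than $\Delta$, an edge joining its two copies; the minimum degree strictly increases at each step while the maximum degree stays $\Delta$, so after finitely many steps one reaches a $\Delta$-regular $G^{+}\supseteq G$. Since $T(G)$ is then an induced subgraph of $T(G^{+})$ --- the three adjacency types of a total graph (two adjacent vertices, two incident edges, a vertex and an incident edge) are inherited and no new ones appear --- and the fractional chromatic number is monotone under taking subgraphs, it suffices to treat $\Delta$-regular $G$.

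Next I would pass to the covering formulation. Call a subset of $V(G)\cup E(G)$ a \emph{total independent set} if it is independent in $T(G)$, equivalently if it has the form $I\cup M$ with $I$ an independent set of $G$, $M$ a matching of $G$, and no edge of $M$ meeting $I$. By linear-programming duality, $\chi_f(T(G))$ is the least total weight of a family of total independent sets with nonnegative weights that covers every vertex and every edge of $G$ with weight at least $1$; so it suffices to produce such a family of weight at most $\Delta+2$.

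For the construction I would take a fractional proper edge-coloring of $G$ --- matchings $M_j$ with weights $\lambda_j\ge 0$ covering every edge with weight at least $1$ --- and \emph{promote} each class $M_j$ to a total independent set by adjoining to it a fractional proper coloring of the graph $G[V\setminus V(M_j)]$ induced on the vertices exposed by $M_j$, finishing with a few color classes that are plain independent sets of $G$. Regularity enters because the matchings covering a fixed vertex $v$ must carry total weight at least $\Delta$ (sum the coverage of the $\Delta$ edges at $v$; each matching covers at most one), so the matchings avoiding $v$ carry weight at most $\chi'_f(G)-\Delta$: with a cheap edge-coloring (total weight $\chi'_f(G)\le\chi'(G)\le\Delta+1$ by Vizing) there is little room to cover $v$ from the edge classes, while deliberately using smaller matchings exposes more vertices but raises the edge budget, and the construction must balance the two. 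The extremes are instructive: when $G$ is far from $1$-factorable a near-optimal edge-coloring already exposes each vertex often enough; when $G$ is essentially $1$-factorable --- the sharpest case being $K_n$ with $n$ even, where a $1$-factorization saturates every vertex in every class --- one is pushed down to the tiny ``mixed'' gadgets $\{e\}\cup\{w\}$ with $w\notin e$, which are always total independent sets. Indeed for $K_4$ these gadgets and the three perfect matchings are the only total independent sets of size $2$, and since $T(K_4)$ has ten vertices and no total independent set of size $3$, the bound $\chi_f(T(K_4))=5=\Delta+2$ is already tight and is attained only using them.

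The hard part is the global accounting. Coloring vertices and edges with separate palettes only gives $\chi_f(G)+\chi'_f(G)\le 2\Delta+2$, so everything depends on overlapping the vertex coverage with the edge-color classes tightly enough, \emph{uniformly in $G$}, to reach $\Delta+2$. Dually, one must show that every weighting $w$ of $V(G)\cup E(G)$ with $\sum_{a\in S}w(a)\le 1$ for all total independent sets $S$ satisfies $\sum_a w(a)\le\Delta+2$; the independent-set constraints and the matching constraints alone are far too weak, so the estimate must be extracted from the mixed constraints quantitatively. I expect the real work to lie in choosing the right edge-coloring and promotion scheme --- most plausibly by averaging a single proper $(\Delta+1)$-edge-coloring over permutations of its color classes together with local recolorings, so that exposed vertices are spread evenly over the colors and no edge of $G$ is exposed by too many classes at once --- and then verifying that the accounting closes exactly at $\Delta+2$, including in the boundary regime $\chi'_f(G)=\Delta+1$.
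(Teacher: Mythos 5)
The paper does not actually prove Theorem~\ref{thm.kire}: it is quoted as a known result of Kilakos and Reed~\cite{KiRe93}, so there is no in-paper argument to compare your proposal against. With that caveat, the proposal still has to be judged as a proof, and as written it is not one.

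Your two preliminary reductions are sound. Embedding $G$ as an induced subgraph of a $\Delta$-regular graph by iterating the ``doubling'' construction is correct, $T(G)$ is then an induced subgraph of $T(G^{+})$ for the reason you give, and $\chi_f$ is monotone under taking subgraphs, so restricting to $\Delta$-regular $G$ is legitimate. The LP/covering reformulation is also standard and correct.

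The gap is everything after that, and you have in effect flagged it yourself. Sentences such as ``I expect the real work to lie in\dots'' and ``verifying that the accounting closes exactly at $\Delta+2$'' announce a plan, not an argument. The promotion scheme is described only qualitatively, and the quantitative heart of the theorem is exactly the part left open. Your own accounting shows how tight it is: in the $\Delta$-regular case the matchings containing a fixed vertex $v$ carry weight at least $\Delta$, so the matchings avoiding $v$ carry weight at most $\chi'_f(G)-\Delta\le 1$, and that weight, spread over the classes that can cover $v$, must still give $v$ coverage $1$ after reserving essentially nothing for plain independent sets. Whether this can be arranged uniformly in $G$ --- in particular in the $1$-factorable regime you single out --- is precisely the content of the theorem, and you do not establish it. The statement ``most plausibly by averaging a single $(\Delta+1)$-edge-coloring over permutations of its color classes together with local recolorings'' is a conjecture about what a proof might look like, not a step in one. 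To make this a proof you would need to specify the family of total independent sets and their weights explicitly and verify the two covering inequalities; nothing in the proposal does that.
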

The graphs achieving the bound given in Theorem~\ref{thm.kire} have
been identified by Ito, Kennedy and Reed~\cite{IKR09}: they are the complete
graphs of even order and the complete bipartite graphs with equal part sizes.
Inspired by these results, Reed~\cite{Ree09} conjectured the following.
\begin{conjecture}\label{conj-reed}
For every $\varepsilon>0$ and every integer $\Delta$, there exists $g$
such that the fractional total chromatic number of every graph with
maximum degree $\Delta$ and girth at least $g$ is at most
$\Delta+1+\varepsilon$.
\end{conjecture}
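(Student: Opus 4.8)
The plan is to settle the remaining cases, $\Delta=2k+1$ with $k\ge 2$, by reducing them to the already‑known case $\Delta=3$: peel off three units of degree into a cubic graph, leave an even‑regular remainder, split that into $2$‑factors, and recombine the fractional total colourings of the pieces. As a preliminary reduction, every graph of maximum degree $\Delta$ and girth at least $g$ is a subgraph of a $\Delta$-regular \cec graph of girth at least $g$; since $T(H)$ is a subgraph of $T(G)$ whenever $H\subseteq G$, a bound of $\Delta+1+\varepsilon$ on the fractional chromatic number of the total graph of the host transfers to the original graph. So assume $G$ is $\Delta$-regular and \cec. For a regular graph this forces every edge cut to contain at least $\Delta$ edges (the cheapest cuts separate off a sub‑forest, whose boundary is automatically large), hence by Tutte's theorem $G$ has a perfect matching $M$; deleting $M$ leaves a $2k$-regular graph which, by Petersen's theorem, has a $2$-factorization $F_1\cup\cdots\cup F_k$. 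With $H:=M\cup F_k$ we get a partition $E(G)=E(H)\cup E(F_1)\cup\cdots\cup E(F_{k-1})$ in which $H$ is cubic and each $F_i$ is a disjoint union of cycles, each of length at least $g$. I will use throughout that $\chi_f(T(G))\le t$ is equivalent to the existence of a finitely supported probability distribution on the total independent sets of $G$ — the sets $I\cup\mu$ with $I$ independent, $\mu$ a matching, and $I$ incident to no edge of $\mu$ — under which each vertex and each edge lies in the random set with probability at least $1/t$.

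The construction uses two inputs. For the cubic part: applying the case $\Delta=3$ of the conjecture to $H$ (which has girth at least $g$) yields, for any prescribed $\varepsilon'>0$ once $g$ is large enough, a distribution on the total independent sets of $T(H)$ covering every vertex and every edge of $H$ with probability at least $\tfrac{1}{4+\varepsilon'}$. For each $2$-factor $F_i$: the total graph of a cycle $C_{2m}$ is the square $C_{4m}^2$, whose independent sets are the vertex‑ and edge‑sets of $C_{2m}$ spaced at cyclic distance at least three, so an elementary linear program shows its total independent sets can be sampled with every vertex covered with any prescribed probability $q_V$ and every edge with probability $q_E$, where $q_E$ may be taken as large as $\tfrac12(1-q_V)-o(1)$, the error vanishing as $m\to\infty$; summing over the components of $F_i$ gives such a distribution for $F_i$. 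Now mix: with probability $\beta$ sample from the cubic distribution on $H$, and with probability $1-\beta$ choose $i\in\{1,\dots,k-1\}$ uniformly and sample from the $F_i$-distribution with $q_V$ small. A vertex is then covered with probability at least $\tfrac{\beta}{4+\varepsilon'}+(1-\beta)q_V$, an edge of $H$ with probability $\tfrac{\beta}{4+\varepsilon'}$, and an edge of $F_i$ with probability $\tfrac{1-\beta}{k-1}\,q_E$; the choice $\beta=\tfrac{4+\varepsilon'}{2k+2+\varepsilon'}$ balances the cubic demand against the $k-1$ cycle demands and makes all three at least $\tfrac{1}{2k+2+\varepsilon'}=\tfrac{1}{\Delta+1+\varepsilon'}$, so letting $\varepsilon'\to 0$ and $g\to\infty$ yields $\chi_f(T(G))\le\Delta+1+\varepsilon$.

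The step I expect to be the real obstacle is making the recombination legitimate: a total independent set of $T(H)$ or of $T(F_i)$ need not be one of $T(G)$, because an independent set of $H$ (or of an $F_i$) may contain two vertices joined by an edge of $G$ lying in another part of the decomposition — and, precisely because $G$ has large girth, any such conflicting pair is far apart inside $H$ and inside the $F_i$'s, so conflicts are invisible to, and cannot be cheaply repaired by, local surgery. Overcoming this is the heart of the matter: one must produce the colourings of the parts coherently — for instance by first committing to a genuine independent set of $G$ of density close to $\tfrac{1}{\Delta+1}$ and drawing all vertex choices from it, or by reorganising the cubic construction as a procedure on $H$ that already respects the ambient edges of $G$ — and then verify, using that $G$ is a tree within any radius $r\ll g$, that the covering probabilities move by only $o(1)$. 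The remaining, more routine, obstacle is the preliminary reduction itself: arranging the host graph to be $\Delta$-regular and \cec, to support the cubic‑plus‑$2$‑factors decomposition, and to keep its girth above $g$ all at once.
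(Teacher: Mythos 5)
Your decomposition is close in spirit to the paper's (a matching plus $k$ sub-$2$-factors, where $\Delta=2k+1$), but the recombination step you flag at the end is not a technical wrinkle to be smoothed over later --- it is a genuine gap that your plan, as written, does not close. A fractional total coloring of $G$ must be supported on total independent sets of $G$, i.e.\ independent sets of $\mathcal{T}(G)$. The distributions you mix are supported on total independent sets of $\mathcal{T}(H)$ and $\mathcal{T}(F_i)$, and those are typically \emph{not} independent in $\mathcal{T}(G)$: an independent set of $H$ may contain two vertices joined by an $F_i$-edge, and (worse) an independent set of $\mathcal{T}(F_i)$ will almost always contain such forbidden pairs, since $F_i$ sees none of the edges of $H$ or of the other factors. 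Large girth does not help here --- conflicts are nonlocal precisely because the ambient edges are ``long-range'' inside each part. You correctly call this ``the heart of the matter,'' but then only gesture at possible fixes (``commit to a genuine independent set of $G$,'' ``reorganise the cubic construction'') without carrying any of them out; none of them is routine, and indeed fixing this is essentially the content of Lemma~\ref{lem-mod}. The paper avoids your problem entirely by never colouring the subgraphs $F_i$: it invokes Lemma~18 of \cite{KKK09} to produce, for each $i$, a weighted \tc \emph{of $G$ itself} in which vertices get weight $\beta\approx 1/\Delta$, edges of $F_i$ get weight $\gamma\approx(\Delta-1)/(2\Delta)$, and the remaining edges get a tiny weight $\alpha$; these are all honest distributions on $\Phi(G)$, so their convex combination with the trivial ``always pick $M$'' distribution is again an honest weighted \tc of $G$, and the bookkeeping you do is then carried out legitimately.

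There is a second, smaller but still real, gap in your preliminary reduction. You assert that every graph of maximum degree $\Delta$ and girth at least $g$ embeds in a $\Delta$-regular, cyclically $\Delta$-edge-connected host of girth at least $g$, and you lean on this both to get regularity (for Tutte/Petersen) and to invoke Proposition~\ref{prop-kksz}-style decompositions. This embedding statement is not proved, and simultaneously guaranteeing regularity, girth $\ge g$, and cyclic $\Delta$-edge-connectivity in a host containing $G$ is not obvious; the paper deliberately does \emph{not} take this route. Instead, it embeds $G$ into a graph $\widehat G$ only to upgrade a sub-$2$-factor to a genuine $2$-factor (inside the proof of Lemma~\ref{lem-mod}), without demanding cyclic connectivity of $\widehat G$, and it handles the non-\cec case separately by induction on $|E(G)|$, cutting $G$ along a small cyclic edge-cut and then stitching colourings of the two sides together with the tree-recolouring Lemma~\ref{lem.exttree}. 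That inductive gluing is the second substantial ingredient your sketch replaces with an unproven embedding claim. A minor point that does work in your favour: for the \cec, $\Delta$-regular case, the arithmetic of your mixing is fine --- with $\beta=\tfrac{4+\varepsilon'}{2k+2+\varepsilon'}$ and $q_E$ slightly below $1/2$ the slack $\varepsilon-\varepsilon'$ absorbs the $o(1)$ losses --- so the obstacles really are the two structural issues above, not the estimates.
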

Conjecture~\ref{conj-reed} was proven to be true when
$\Delta\in\{3\}\cup\{4,6,8,10,\ldots\}$ by Kaiser, Kr\'a\soft{l} and
King~\cite{KKK09} in the following stronger form.
\begin{theorem}\label{thm.kkk}
Let $\Delta\in\{3\}\cup\{4,6,8,10,\ldots\}$. There exists $g$ such that the fractional total chromatic
number of every graph with maximum degree $\Delta$ and girth at least $g$ is $\Delta+1$.
\end{theorem}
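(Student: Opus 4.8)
The plan is to prove the two bounds $\chi''_f(G)\ge\Delta+1$ and $\chi''_f(G)\le\Delta+1$ separately; the first is routine and the second is the substance. For the lower bound, if $G$ has a vertex $v$ of degree $\Delta$ then $v$ together with its $\Delta$ incident edges induces a clique of order $\Delta+1$ in the total graph $T(G)$, so that $\chi''_f(G)=\chi_f(T(G))\ge\omega(T(G))\ge\Delta+1$. For the upper bound I would first reduce to a convenient class: by a standard padding construction, every graph of maximum degree at most $\Delta$ and girth at least $g$ is a subgraph of some $\Delta$-regular graph of girth at least $g$ (one can even arrange it to be cyclically highly edge-connected), and $\chi''_f$ is monotone under subgraphs, since $T(H)$ is a subgraph of $T(G)$ whenever $H\subseteq G$. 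So it suffices to bound $\chi''_f$ for $\Delta$-regular graphs of girth at least $g$.

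The key step is a reformulation in terms of matchings. A fractional total colouring of value $\Delta+1$ is exactly a probability distribution on the total independent sets of $G$ in which every vertex and every edge lies in the random set with probability at least $\tfrac1{\Delta+1}$; and each total independent set $I$ splits uniquely as $I=M\cup A$ with $M:=I\cap E(G)$ a matching and $A:=I\cap V(G)$ an independent set disjoint from $V(M)$, the set $A$ being all of $V(G)\setminus V(M)$ precisely when $M$ is maximal. Since $A$ and $V(M)$ are disjoint subsets of $V(G)$ we have $|A|+2|M|\le|V(G)|$; averaging this over the distribution, and using $\sum_e\Pr[e\in I]\ge\tfrac{|E(G)|}{\Delta+1}$, $\sum_v\Pr[v\in I]\ge\tfrac{|V(G)|}{\Delta+1}$ together with the regularity identity $|E(G)|=\tfrac\Delta2|V(G)|$, forces equality everywhere. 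Thus at the optimum every edge is covered with probability \emph{exactly} $\tfrac1{\Delta+1}$ and every colour class is $M\cup(V(G)\setminus V(M))$ for a \emph{maximal} matching $M$. This yields the lemma I would aim to prove: for $\Delta$-regular $G$, $\chi''_f(G)=\Delta+1$ if and only if $G$ carries a probability distribution on its maximal matchings in which each edge is used with probability exactly $\tfrac1{\Delta+1}$; equivalently, $\tfrac1{\Delta+1}\,\mathbf 1_{E(G)}$ lies in the convex hull of the incidence vectors of the maximal matchings of $G$.

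It then remains to build such a distribution when $\Delta=3$ or $\Delta$ is even, and this is where the large girth enters. The point $\tfrac1{\Delta+1}\,\mathbf 1_{E(G)}$ lies in the ordinary matching polytope once $g>\Delta+1$: the only odd sets that could violate an Edmonds inequality are forced by the girth to span more than $\Delta$ vertices. So the real task is to convert a fractional matching of density $\tfrac1{\Delta+1}$ into a fractional combination of \emph{maximal} matchings, with the density preserved on the nose; equivalently, to pick a random independent set $A$ with $\Pr[v\in A]=\tfrac1{\Delta+1}$ such that $G-A$ always has a perfect matching, and then random perfect matchings of the various $G-A$ whose edge-marginals recombine to exactly $\tfrac1{\Delta+1}$. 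For even $\Delta$ I would model this on $K_{\Delta+1}$, whose edge set has a near-$1$-factorization into $\Delta+1$ maximal matchings, together with a $2$-factorization of $G$ (Petersen's theorem), using the girth to guarantee that the auxiliary perfect matchings exist and that no parity obstruction survives; for $\Delta=3$ I would instead invoke the perfect-matching polytope of bridgeless cubic graphs (every edge lies in a perfect matching and $\tfrac13\,\mathbf 1$ lies in the polytope, by Edmonds and Seymour), reducing to the bridgeless case by a routine modification, and splice in a random independent correction set so that the outcome is always a maximal matching with the right marginals.

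The genuinely hard part is this last step, and in particular the requirement that the edge-density be $\tfrac1{\Delta+1}$ \emph{exactly}: by the averaging argument above, any over-coverage of the edges necessarily under-covers the vertices, so there is no slack to absorb error terms. Getting the weaker bound $\Delta+1+\varepsilon$ from a local probabilistic construction (random greedy, or a Lov\'asz-local-lemma argument, with marginals converging as $g\to\infty$) is comparatively soft; extracting the last fraction is precisely where the parity of $\Delta$ intervenes --- it is what makes the model configurations $K_{\Delta+1}$ and the cubic matching polytope cooperate --- and is the gap that remains open for odd $\Delta\ge5$, which is the case the present paper must close.
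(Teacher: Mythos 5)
The statement you set out to prove is Theorem~\ref{thm.kkk}, which the present paper cites from Kaiser, Kr\'al' and King~\cite{KKK09} without reproving it; there is thus no in-paper proof to compare against, but the fragments the authors do import from~\cite{KKK09} (a $\Delta$-decomposition into sub-$2$-factors plus a matching, together with the single-factor weighted colourings of Lemma~\ref{lem.18}, combined as in Lemma~\ref{prop.1}) make it clear that the real argument is a factor-by-factor construction of explicit weighted colourings, not a polytope argument of the kind you propose. Your preliminary analysis is nonetheless correct and illuminating: the lower bound via a $(\Delta+1)$-clique in $\mathcal{T}(G)$, the reduction by padding to $\Delta$-regular (even cyclically highly edge-connected) graphs, the averaging identity $|A|+2|M|\le|V(G)|$ forcing any optimal distribution onto maximal matchings with edge-marginal exactly $\tfrac1{\Delta+1}$, and the observation that $\tfrac1{\Delta+1}\,\mathbf 1_{E(G)}$ lies in the matching polytope once $g>\Delta+1$ are all sound.

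The gap is precisely the step you yourself flag as ``the genuinely hard part'': you never actually produce a distribution on \emph{maximal} matchings with the required marginals. ``Model this on $K_{\Delta+1}$ together with a $2$-factorization, using the girth to guarantee the auxiliary perfect matchings exist and that no parity obstruction survives'' and ``splice in a random independent correction set'' name desiderata, not constructions, and nothing in your sketch makes the edge-marginal come out to $\tfrac1{\Delta+1}$ exactly rather than approximately. Worse, for $\Delta=3$ the appeal to the perfect-matching polytope of bridgeless cubic graphs is off-target: a distribution on perfect matchings has all vertex-marginals equal to $0$, whereas your own reformulation demands each vertex be \emph{uncovered} with probability exactly $\tfrac14$, so one must work with maximal-but-not-perfect matchings from the start, and the relevant edge-density is $\tfrac14$, not the $\tfrac13$ that Edmonds--Seymour delivers. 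What you have is a correct and clean reduction of the theorem to a hard combinatorial statement about maximal matchings; the statement itself is left unproved, and proving it is exactly the content of~\cite{KKK09}, where it is done by a quite different route (weighted colourings built locally along each sub-$2$-factor of a $\Delta$-decomposition, then recombined), not by analysing the maximal-matching polytope.
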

The purpose of our work is to settle Conjecture~\ref{conj-reed} by
proving that it also holds for odd values of $\Delta$. Our main theorem
reads as follows.
\begin{theorem}\label{thm-main}
For every $\varepsilon>0$ and every odd integer $\Delta\ge5$, there exists an integer $g$ such that
for the fractional total chromatic number of every graph with maximum degree $\Delta$ and girth at least $g$
is at most $\Delta+1+\varepsilon$.
\end{theorem}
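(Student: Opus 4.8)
The plan is to recast the statement as the existence of a sufficiently well-spread random \emph{total independent set}. Call a pair $(W,N)$ a total independent set of $G$ if $W\subseteq V(G)$ is independent, $N\subseteq E(G)$ is a matching, and no edge of $N$ is incident with a vertex of $W$; these pairs are precisely the independent sets of the total graph of $G$, whose fractional chromatic number is by definition the fractional total chromatic number of $G$. Since a graph $H$ satisfies $\chi_f(H)\le k$ exactly when there is a probability distribution over the independent sets of $H$ that places each vertex in the random set with probability at least $1/k$, it suffices to exhibit a probability distribution over the total independent sets of $G$ under which each vertex and each edge lies in the random total independent set with probability at least $1/(\Delta+1+\varepsilon)$. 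First I would reduce to $G$ being $\Delta$-regular: every graph of maximum degree $\Delta$ and girth at least $g$ is an induced subgraph of a $\Delta$-regular graph of girth at least $g$, and a distribution for the larger graph restricts to one for the original. Then, reasoning with a vertex-minimal counterexample for the given $\Delta$ and $\varepsilon$, I would normalise $G$ to be moreover \cec: a cyclic edge cut with fewer than $\Delta$ edges lets one split $G$, complete each side to a $\Delta$-regular graph of girth at least $g$ by a bounded-size girth-preserving gadget, apply minimality, and glue the two colourings along the cut after permuting colours so that they agree on the crossing edges.

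The crucial point is that it is enough to use total independent sets of the very special form $\bigl(V(G)\setminus V(M),\,M\bigr)$ with $M$ a \emph{maximal} matching of $G$, because maximality is exactly what forces the set of $M$-uncovered vertices to be independent. For such a pair a vertex is ``used'' precisely when it is $M$-uncovered and an edge is ``used'' precisely when it lies in $M$, so the whole theorem follows from the statement below, which is the heart of the matter.

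\medskip
\noindent\emph{Key claim.} For every odd $\Delta\ge5$ and every $\varepsilon>0$ there is $g$ such that every $\Delta$-regular graph of girth at least $g$ admits a probability distribution over its maximal matchings $\mathbf M$ with $\Pr[e\in\mathbf M]\ge 1/(\Delta+1+\varepsilon)$ for every edge $e$ and $\Pr\bigl[v\notin V(\mathbf M)\bigr]\ge 1/(\Delta+1+\varepsilon)$ for every vertex $v$.
\medskip

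Since $M$ is maximal, $\Pr[v\notin V(\mathbf M)]=1-\sum_{e\ni v}\Pr[e\in\mathbf M]$, so the two requirements are jointly feasible precisely because $1/(\Delta+1+\varepsilon)<1/(\Delta+1)$; the ideal value $1/(\Delta+1)$ for both quantities would be attained by picking a colour uniformly from a proper $(\Delta+1)$-edge-colouring all of whose colour classes happen to be maximal matchings, which occurs exactly when the ``missing-colour'' labelling of the vertices induced by the colouring is a proper vertex colouring --- essentially when the total chromatic number of $G$ equals $\Delta+1$, which is not known. The role of $\varepsilon$ is to permit an approximate form of this structure. To prove the key claim I would take a proper $(\Delta+1)$-edge-colouring $M_1,\dots,M_{\Delta+1}$ of $G$ (Vizing's theorem), choose an index $i$ uniformly at random, and then greedily enlarge $M_i$ to a maximal matching $\mathbf M$ by adding, in a canonical local order, edges both of whose endpoints miss colour $i$. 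Every edge lies in $\mathbf M$ with probability at least $\Pr[i=c(e)]=1/(\Delta+1)$, and the only way a vertex $v$ with missing colour $i$ can fail to remain $\mathbf M$-uncovered is that some ``augmenting'' edge incident with $v$ is used during the enlargement; the number of such edges near $v$ is controlled by the number of monochromatic edges of the missing-colour labelling. The girth is exploited twice: it makes $G$ locally tree-like, so the edge-colouring can first be adjusted by local corrections to make its missing-colour labelling nearly proper, and it localises the enlargement, so that whether $v$ ends up covered depends only on a bounded-radius ball around $v$; a Lovász-Local-Lemma-type computation then gives $\Pr[v\notin V(\mathbf M)]\ge\bigl(1-o_g(1)\bigr)/(\Delta+1)$, which beats $1/(\Delta+1+\varepsilon)$ once $g$ is large. (Alternatively one can bypass the integral colouring and build $\mathbf M$ directly from a randomised local matching rule designed so that its edge- and vertex-marginals match those of the infinite $\Delta$-regular tree, which lie strictly inside the feasible window.)

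The main obstacle is exactly this last step: one must produce a maximal-matching distribution whose edge- and vertex-marginals are \emph{uniformly} pinched between $1/(\Delta+1+\varepsilon)$ and $1/(\Delta+1)$, and, since $\Delta$ is a fixed odd integer while $\varepsilon$ may be arbitrarily small, the error terms incurred in turning a $(\Delta+1)$-edge-colouring (or a local matching rule) into an honest maximal matching must be made $o_g(1)$ with $\Delta$ and $\varepsilon$ held fixed --- that is, driven below a threshold that shrinks with $\varepsilon$ --- rather than merely $o(1)$ as a function of $\Delta$. This is what forces a genuinely local construction and a quantitative use of the girth hypothesis. Granting the key claim, the reductions above together with the final calibration of $g$ in terms of $\Delta$ and $\varepsilon$ are routine, completing the proof.
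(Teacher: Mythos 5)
The high-level plan (reduce to the cyclically $\Delta$-edge-connected case, then handle that case locally) parallels the paper's, but both halves of your argument have genuine gaps.

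Your gluing step is wrong as stated. You propose to split $G$ at a small cyclic edge-cut $F$, color both sides by minimality, and ``glue the two colourings along the cut after permuting colours so that they agree on the crossing edges.'' No single measure-preserving permutation of the palette will in general make two fractional colorings agree on $F$: the sets $\{c_A(e)\}_{e\in F}$ may overlap in a pattern different from $\{c_B(e)\}_{e\in F}$, so no isometry of $[0,k]$ can carry one family onto the other. Moreover, even if the edge colors matched, you would still need $c_A(u)\cap c_B(v)=\emptyset$ for every cut edge $uv$ with $u\in A$, $v\in B$, which nothing in your sketch enforces. This is precisely why the paper does not glue at the cut but instead recolors a depth-$d$ tree around it (Lemma~\ref{lem.exttree}), spending an $\varepsilon-\varepsilon'$ budget to ``fade'' from the prescribed boundary sets at the root down to the original coloring at the leaves. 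Your reduction does not carry any such slack and would not go through.

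The core of your approach --- a distribution over \emph{maximal} matchings with edge marginals pinched in a window of width $\Theta(\varepsilon/\Delta^2)$ --- is a genuinely different idea from the paper's (which decomposes $G$ into a near-perfect matching plus $\lfloor\Delta/2\rfloor$ sub-$2$-factors via Proposition~\ref{prop-kksz} and averages the weighted colorings of Lemma~\ref{lem-mod}). But your sketch of it does not close. The step ``adjust the edge-coloring by local corrections to make its missing-colour labelling nearly proper'' is exactly where the difficulty sits: a \emph{properly} proper missing-colour labelling is a total $(\Delta+1)$-coloring, i.e.\ the stronger conjecture the paper explicitly leaves open, and you give no quantitative mechanism, local or otherwise, to control the density of violations or the spread of the greedy enlargement. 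Without such control, the claim that $\Pr[v\notin V(\mathbf M)]\ge(1-o_g(1))/(\Delta+1)$ is unsupported: a single monochromatic edge of the missing-colour labelling near $v$ can already flip $v$ from uncovered to covered, and neither the girth hypothesis alone nor an unspecified Lov\'asz-Local-Lemma computation obviously bounds this. The alternative one-liner about matching marginals on the infinite $\Delta$-regular tree is likewise only an aspiration. As it stands, the key claim is asserted, not proved, and the gluing step would fail, so the proposal does not constitute a proof.
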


The approach we use also yields a proof for the
case where $\Delta$ is even, and thus a full proof of
Conjecture~\ref{conj-reed}. However, we restrict ourselves to the case
of odd $\Delta\ge5$, since a stronger result,
Theorem~\ref{thm.kkk}, has been established~\cite{KKK09}.
Kaiser et al.~\cite{KKK09} also conjectured that the statement
of Theorem~\ref{thm.kkk} also holds for odd values of $\Delta$.
\begin{conjecture}
Let $\Delta\ge5$ be an odd integer.
There exists $g$ such that the fractional total chromatic
number of every graph with maximum degree $\Delta$ and girth at least $g$ is $\Delta+1$.
\end{conjecture}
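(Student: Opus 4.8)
The plan is to reduce the conjecture to a statement about maximal matchings in regular graphs of large girth, and then to prove that statement. Throughout, $\chi''_f(G)$ denotes the fractional total chromatic number, i.e.\ the fractional chromatic number of the total graph $T(G)$ (with vertex set $V(G)\cup E(G)$).

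\textbf{Reduction to a regular matching problem.}
It suffices to prove the upper bound $\chi''_f(G)\le\Delta+1$ for $\Delta$-regular graphs $G$ of girth at least $g$: any graph of maximum degree at most $\Delta$ embeds as a subgraph (which can only decrease $\chi''_f$) into a $\Delta$-regular graph of girth at least $g$, by an Erd\H os--Sachs type construction; and the matching lower bound $\chi''_f(G)\ge\Delta+1$ holds whenever $G$ has a vertex of degree $\Delta$, since $T(G)$ then contains $K_{\Delta+1}$, namely a vertex together with its incident edges. For a $\Delta$-regular graph $G$ I claim that $\chi''_f(G)\le\Delta+1$ is \emph{equivalent} to the existence of a probability distribution $\mathcal D$ on the maximal matchings of $G$ with $\Pr_{M\sim\mathcal D}[e\in M]=\tfrac1{\Delta+1}$ for every edge $e$ (equivalently, $\Pr_{M\sim\mathcal D}[v\text{ unsaturated by }M]=\tfrac1{\Delta+1}$ for every vertex $v$). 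Indeed, a fractional total $(\Delta+1)$-colouring is a probability distribution on total independent sets --- each such set consists of a matching $M$ together with an independent set contained in $V(G)\setminus V(M)$ --- in which every vertex and every edge is covered with probability at least $\tfrac1{\Delta+1}$. Using the identity $|V(G)|=2|M|+|V(G)\setminus V(M)|$ and the fact that the vertex-part of a total independent set lies inside the set of vertices left unsaturated by its matching-part, a short counting argument shows that $\Delta$-regularity forces all these covering probabilities to equal $\tfrac1{\Delta+1}$, forces the vertex-part to be \emph{exactly} the set of unsaturated vertices (so that the matching-part is a maximal matching), and forces each edge to lie in that matching with probability $\tfrac1{\Delta+1}$; the converse is immediate. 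Thus the conjecture reduces to constructing such a distribution $\mathcal D$ when $\Delta\ge5$ is odd and $g$ is sufficiently large.

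\textbf{An approximate distribution from a local rule.}
To build $\mathcal D$ I would run a randomised \emph{local} procedure on $G$ producing a maximal matching --- for instance a parallel random-greedy maximal matching, or a Luby-type procedure, equipped with a tunable parameter $p$ controlling how eagerly edges are selected. Because $G$ has girth at least $g$, the bounded-depth neighbourhood of each edge is a piece of the infinite $\Delta$-regular tree $T_\Delta$, so the probability that a fixed edge ends up in the matching converges, as $g\to\infty$, to an explicit function $\phi_\Delta(p)$ that can be computed on $T_\Delta$; one then chooses $p$ so that $\phi_\Delta(p)=\tfrac1{\Delta+1}$. This is possible for $\Delta\ge3$, because $\tfrac1{\Delta+1}$ lies strictly inside the range of edge-densities of invariant maximal matchings of $T_\Delta$ --- in sharp contrast with $\Delta=2$, where every maximal matching of a cycle $C_n$ with $n\not\equiv0\pmod3$ already has more than $n/3$ edges, so $\tfrac13\mathbf 1$ is not even a convex combination of maximal matchings. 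Moreover, for odd $\Delta$ every $\Delta$-regular graph has even order, and (since small subgraphs are forests once $g$ is large) its matching deficiency is $O(|V(G)|/g)$, so near-perfect maximal matchings are abundant. This step already yields $\chi''_f(G)\le\Delta+1+o(1)$ as $g\to\infty$, reproving Theorem~\ref{thm-main}.

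\textbf{Correcting to exact equality --- the main obstacle.}
The procedure above produces edge-probabilities lying within some $\delta=\delta(g)\to0$ of $\tfrac1{\Delta+1}$; equivalently, it produces a point $x$ that is $\delta$-close coordinatewise to the centre $\tfrac1{\Delta+1}\mathbf 1$ and is a convex combination of maximal matchings of $G$. The remaining, and hardest, step is to land \emph{exactly} on $\tfrac1{\Delta+1}\mathbf 1$. My plan is to view the discrepancy $\tfrac1{\Delta+1}\mathbf 1-x$ as a circulation on $G$ and to eliminate it by local exchanges: along bounded-length alternating cycles and paths one can replace one maximal matching in the mixture by another, perturbing $x$ in prescribed coordinate directions, and large girth provides enough disjoint such cycles to realise any small circulation while never leaving the class of maximal matchings. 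The delicate points are (i) proving that these local exchanges span every direction needed in a neighbourhood of the centre --- this is precisely where the $\Delta=2$ failure shows that no purely local or limiting argument can suffice, and where one genuinely uses $\Delta\ge3$ and the near-perfect matchability granted by large girth --- and (ii) determining how large $g$ must be as a function of $\Delta$. I expect essentially all of the difficulty to reside in (i). Since $\Delta=3$ is already covered by Theorem~\ref{thm.kkk}, it suffices to push this through for odd $\Delta\ge5$, where the extra slack in the independence ratio and in the deficiency bound should keep the bookkeeping of the correction step under control.
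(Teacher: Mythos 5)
First, a point of orientation: the statement you are proving is stated in the paper as an open \emph{conjecture}, not a theorem. The authors explicitly write that they were not able to settle it; what the paper actually proves is only the weaker bound $\Delta+1+\varepsilon$ (Theorem~\ref{thm-main}), via a $\Delta$-decomposition into a matching and sub-$2$-factors, the weighted colorings of Kaiser, Kr\'a\soft{l} and King, and a reduction of the general case to the cyclically $\Delta$-edge-connected case. So there is no proof in the paper to compare yours against, and your proposal would need to stand entirely on its own.

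It does not. Your reduction to finding a distribution on maximal matchings with every edge-marginal equal to $\tfrac1{\Delta+1}$ is sound in the direction you actually need (such a distribution immediately yields a fractional $(\Delta+1)$-total coloring by taking each maximal matching together with its set of unsaturated vertices), and your second step plausibly recovers the known $\Delta+1+\varepsilon$ bound. But the entire difficulty of the conjecture is concentrated in your third step, and there you offer only a plan: you assert that small discrepancies from the uniform point can be eliminated by local exchanges along alternating paths and cycles while staying inside the convex hull of maximal matchings, and you yourself flag that proving these exchanges span all needed directions is ``precisely where the $\Delta=2$ failure shows that no purely local or limiting argument can suffice'' and that you ``expect essentially all of the difficulty to reside'' there. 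An acknowledged unproved step that contains essentially all of the difficulty is a gap, not a proof. Two smaller cautions: (i) the claimed \emph{equivalence} in your reduction is doubtful in the forward direction --- in a $\Delta$-regular graph there can exist total independent sets (a small maximal matching together with all of its unsaturated vertices) of size exceeding $(|V|+|E|)/(\Delta+1)$, so your ``short counting argument'' does not force all covering probabilities to equal $\tfrac1{\Delta+1}$; this is harmless for the upper bound but should not be stated as an equivalence without proof; (ii) the exchange argument must preserve \emph{maximality} of every matching in the support after each local perturbation, which is a global constraint that bounded-length alternating walks do not obviously respect. Until the correction step is carried out, the conjecture remains open.
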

However, we were not able to settle this stronger conjecture.

\section{Definitions and notation}
\label{sec.def}
Let us start by defining the relevant concepts.
For $X\subseteq\mathbb{R}$, we define $\mu(X)$ to be the Lebesgue measure of $X$.
If $G$ is a graph, then $V(G),E(G)$ and $\Delta(G)$ are its vertex-set,
edge-set and maximum degree, respectively. The \emph{total graph
$\mathcal{T}(G)$} of $G$ is the
graph with vertex-set $V(G)\cup E(G)$, where two vertices are
adjacent if and only if the corresponding elements of $G$ are adjacent
or incident. In other words, $xy\in E(\mathcal{T}(G))$ if and only if
\begin{itemize}
\item
$x,y\in V(G)$ and $xy\in E(G)$, or
\item
$x,y\in E(G)$ and $x$ and $y$ share a vertex in $G$, or
\item
$x\in V(G)$, $y\in E(G)$ and $y$ is incident to $x$ in $G$.
\end{itemize}
A \emph{total independent set} of $G$ is an independent set of
$\mathcal{T}(G)$.
Let $\Phi(G)$ be the set of all total independent sets of $G$.

Consider a function $w:\Phi(G)\to[0,1]$ and let $x\in V(\mathcal{T}(G))$.
We define $w[x]$ to be the sum of $w(I)$ over all $I\in\Phi(G)$
containing $x$. The mapping $w$ is a \emph{fractional $k$-total coloring} of
$G$ if and only if
\begin{itemize}
\item $\sum_{I\in\Phi(G)}w(I)\le k$; and
\item $w[x]\ge1$ for every $x\in V(\mathcal{T}(G))$.
\end{itemize}
Observe that $G$ has a fractional $k$-total coloring if and only if
there exists a function $c:V(\mathcal{T}(G))\to2^{[0,k]}$ such that
\begin{itemize}
\item $\mu(c(x))\ge1$ for every $x\in V(\mathcal{T}(G))$; and
\item $c(x)\cap c(y)=\emptyset$ for every edge $xy\in E(\mathcal{T}(G))$.
\end{itemize}
Notice that the second condition is the same as to require that $\mu(c(X)\cap
c(Y))=0$ whenever $x$ and $y$ are adjacent in $\mathcal{T}(G)$, since we consider
only finite graphs.
The \emph{fractional total chromatic number} of $G$ is the infimum of
all positive real numbers $k$ for which $G$ has a fractional $k$-total
coloring. As is well-known, the fractional total chromatic number of a
finite graph is always a rational number and the infimum is actually a
minimum.

A mapping $w:\Phi(G)\to[0,1]$ such that
\begin{itemize}
\item $\sum_{I\in\Phi(G)}w(I)=1$; and
\item $w[x]\ge\alpha$ for every $x\in V(\mathcal{T}(G))$
\end{itemize}
is a \emph{weighted $\alpha$-total coloring} of $G$. Observe that every fractional
$k$-total coloring yields a weighted $\frac{1}{k}$-total coloring $w$ such that
$w[x]\ge1/k$ for every $x\in V(\mathcal{T}(G))$. Conversely, one can derive
a fractional $\frac{1}{\alpha}$-total coloring from a weighted
$\alpha$-total coloring of $G$.
There are many equivalent definitions of a fractional coloring of a
graph, and we refer to the book by Scheinerman and Ullman~\cite{ScUl97}
for further exposition about fractional colorings (and, more generally,
fractional graph theory).

We now introduce some additional notation.
Two functions $f,g:X\to Y$ \emph{agree on $Z\subseteq X$} if the
restrictions of $f$ and $g$ to $Z$ are equal.
Let $G$ be a graph and $v\in V(G)$.
For a spanning subgraph $F$ of $G$, the degree of the vertex $v$ in $F$
is $\deg_F(v)$.
A spanning subgraph of $G$ with maximum degree at most $2$ is
a \emph{sub-$2$-factor} of $G$.
An \emph{$\ell$-decomposition} of $G$ is a partition of the edges
of $G$ into $\lceil\ell/2\rceil$ sub-$2$-factors, one of which is
required to be a matching if $\ell$ is odd.

Given a connected graph $G$,
an \emph{edge-cut} $F$ of $G$ is a subset of edges such that the removal
of $F$ disconnects $G$. Note that removing a minimal edge-cut splits $G$ into
exactly two connected components. An edge-cut $F$ is \emph{cyclic} if
every connected component of $G-F$ contains a cycle. A graph is
\emph{cyclically $k$-connected} if it has more than $k$ edges and no
cyclic edge-cut of size less than $k$.

We also use the following terminology from~\cite{KKSZ07}.
Let $H$ be a subgraph of a connected graph $G$. A path $P$ of
$G$ is an \emph{$H$-path} if both end-vertices of $P$ belong to $H$
but no internal vertex and no edge of $P$ belongs to $H$.
Given an integer $d$, the subgraph $H$ is \emph{$d$-closed} if the
length of every $H$-path is greater than $d$.
The \emph{$d$-connector} of $H$ in $G$ is the smallest $d$-closed
subgraph of $G$ that contains $H$.
The \emph{neighborhood $N(H)$ of $H$} is the subgraph of $G$ spanned by
all the edges of $G$ with at least one end-vertex in $H$.
We end this section by citing a lemma of Kaiser et al.~\cite[Lemma 8]{KKSZ07}
about connectors.

\begin{lemma}\label{lem.conn}
Let $d>\ell\ge1$. Suppose that $H$ is a subgraph of $G$ with at most
$\ell$ edges and no isolated vertices. If the girth of $G$ is greater
than $(d+1)\ell$, then the neighborhood of the $d$-connector of $H$ is a
forest.
\end{lemma}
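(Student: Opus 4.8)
The plan is to bound the number of edges of the $d$-connector $C$ of $H$ and then argue that its neighborhood $N(C)$ cannot contain a cycle because such a cycle would force a short cycle in $G$, contradicting the girth hypothesis. First I would describe the $d$-connector constructively: start with $C_0 = H$, and as long as there is a $C_i$-path $P$ of length at most $d$, set $C_{i+1} = C_i \cup P$; since $G$ is finite this terminates in a $d$-closed subgraph, and a standard argument shows it is the smallest one, so it equals $C$. The key quantitative observation is that each added path $P$ has length at most $d$, hence contributes at most $d$ new edges and at most $d-1$ new vertices; moreover adding $P$ either creates a cycle through $H$ (if its two endpoints lie in the same component of $C_i$) or, when $C_i$ is a forest and the endpoints lie in two different components, keeps $C_{i+1}$ a forest. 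Using the girth bound $g > (d+1)\ell$, I would show no such cycle can ever appear: a cycle inside $C$ is formed from parts of $H$ (at most $\ell$ edges) together with at most $\ell$ added paths, each of length at most $d$, hence has length at most $\ell + \ell d = (d+1)\ell < g$, which is impossible. Wait—I need to be careful that the bound on the number of added paths is genuinely $\ell$; I would argue that since $H$ has at most $\ell$ edges, it has at most $\ell$ edges and at most $2\ell$ vertices, and more to the point $H$ has at most $\ell$ "ends" available, so in fact the relevant count is controlled by $\ell$. This shows $C$ is a forest with at most $\ell + \ell d = (d+1)\ell$ edges.

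Now the main step: show $N(C)$ is a forest. Suppose not, so $N(C)$ contains a cycle $Z$. Since $C$ is a forest, $Z$ uses at least one edge outside $C$; but every edge of $N(C)$ has an endpoint in $C$. I would walk around $Z$ and record its maximal subpaths lying in $C$ (possibly trivial, i.e.\ single vertices) alternating with edges of $N(C) \setminus E(C)$; each such "bridging" edge has an endpoint in $C$, so consecutive $C$-segments of $Z$ are joined by a path of length at most\dots here I would need a short $C$-path. The cleanest route: take any edge $e = uv \in E(Z) \setminus E(C)$ with $u \in V(C)$; then $v \notin V(C)$ would force the next edge of $Z$ at $v$ also to lie in $N(C)\setminus E(C)$, hence have its other endpoint in $C$, producing a $C$-path of length $2$ through $v$. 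Iterating, $Z$ decomposes into $C$-paths, each of length at most $2$ (actually at most some small constant depending only on how $N(C)$ is defined — paths of length $1$ or $2$), together with subpaths inside $C$. Replacing each $C$-segment of $Z$ by a shortest path in $C$ between its endpoints (which exists and has length at most $|E(C)| \le (d+1)\ell < g$), I obtain a closed walk in $G$ of length at most $(d+1)\ell$ containing a cycle — contradicting the girth assumption. Hence $N(C)$ has no cycle and is a forest.

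The step I expect to be the main obstacle is getting the counting of added paths in the $d$-connector to come out to exactly $\ell$ rather than, say, $2\ell$ or $|V(H)|$: one must exploit that $H$ has at most $\ell$ edges and no isolated vertices (so at most $2\ell$ vertices, but the number of "branch points" where new paths attach is tied to the edge count) to guarantee that at most $\ell$ paths of length $\le d$ are ever added, giving the clean bound $|E(C)| \le (d+1)\ell$. The rest — the termination and minimality of the construction, and the reduction of a hypothetical cycle in $N(C)$ to a short closed walk in $G$ — is routine once this bound is in hand. I would also double-check the edge case where $H$ itself already contains a cycle: then $H$ has girth $\le \ell < g$, contradicting the hypothesis, so in fact $H$ is a forest to begin with, which simplifies the induction.
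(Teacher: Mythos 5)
The paper does not prove this lemma; it cites it directly from Kaiser, Kr\'a\soft{l}, \v Skoda and \v Skrekovski~\cite[Lemma~8]{KKSZ07}, so there is no in-paper argument to compare against. Evaluating your proposal on its own merits, the overall strategy---build the $d$-connector $C$ greedily, bound $|E(C)|$, deduce that $C$ is a forest, and then rule out cycles in $N(C)$ via short $C$-paths---is the standard one and is sound. But there are two concrete problems.

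First, the counting step you flag as ``the main obstacle'' is indeed a gap, and the ``at most $\ell$ ends available'' phrasing is not an argument. The correct accounting is by components: since the girth exceeds $(d+1)\ell>\ell$, $H$ is a forest; as it has at most $\ell$ edges and no isolated vertices, each component contains at least one edge, so $H$ has at most $\ell$ components. One then shows by induction that every added $C_i$-path of length at most $d$ must join two \emph{distinct} components of $C_i$ (for if both endpoints lay in one component, the added path together with the unique path in $C_i$ between them would give a cycle of length at most $d+|E(C_i)|\le d+\ell+id\le(d+1)\ell<g$, a contradiction), so the component count drops by one each step, and at most $\ell-1$ paths are ever added. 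This yields $|E(C)|\le\ell+(\ell-1)d<(d+1)\ell$ and that $C$ is a forest. Your circular-looking bound is rescuable but only by running exactly this induction.

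Second, the final closed-walk argument is both unnecessary and incorrect as stated. You already observe the decisive fact: every edge $e=uv$ of a hypothetical cycle $Z\subseteq N(C)$ that lies outside $C$ gives rise to a $C$-path of length $1$ (if both $u,v\in V(C)$) or length $2$ (if, say, $v\notin V(C)$, forcing the next edge of $Z$ at $v$ to end in $C$). Since $d>\ell\ge1$ forces $d\ge2$, and the $d$-connector is $d$-closed (every $C$-path has length $>d\ge2$), a $C$-path of length $\le2$ is already a contradiction---no closed walk is needed. Instead you replace each $C$-segment of $Z$ by a shortest path in $C$ and claim the resulting closed walk has length at most $(d+1)\ell$; this bound does not follow, since $Z$ may alternate between several $C$-segments and bridging edges, and each replacement could cost up to $|E(C)|$ edges, so the total is not controlled by $(d+1)\ell$. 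Drop that detour and conclude directly from the short $C$-path: hence $Z\subseteq C$, contradicting that $C$ is a forest.
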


\section{The cyclically $\Delta$-edge-connected case}
\label{sec.cyc}
We find a fractional $(\Delta+1+\varepsilon)$-total coloring of a given
\cec graph $G$ with odd maximum degree $\Delta\ge5$ in the following way: 
first, we decompose $G$ into a matching and a set of sub-$2$-factors, 
we then search for suitable weighted colorings corresponding to the factors 
and combine them into a weighted $\frac{1}{\Delta+1+\varepsilon}$-total coloring of $G$.

To find the decomposition, we use the following proposition~\cite[Proposition 1]{KKSZ07}.
\begin{proposition}\label{prop-kksz}
Every cyclically $\Delta$-edge-connected
graph with maximum degree $\Delta$ has a $\Delta$-decomposition.
\end{proposition}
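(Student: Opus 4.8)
The proof naturally splits according to the parity of $\Delta$, and cyclic connectivity is needed only in the odd case. When $\Delta$ is even the plan is classical: a graph $G$ with $\Delta(G)\le\Delta$ occurs as a subgraph of a $\Delta$-regular multigraph $G'$, obtained by taking two disjoint copies $G_1,G_2$ of $G$ and, for each vertex $v$, adding $\Delta-\deg_G(v)$ parallel edges between the two copies of $v$. By Petersen's $2$-factor theorem the edges of $G'$ partition into $\Delta/2$ spanning subgraphs of maximum degree at most $2$, and keeping in each of these only the edges lying in $E(G_1)\cong E(G)$ yields a partition of $E(G)$ into $\Delta/2$ sub-$2$-factors, i.e.\ a $\Delta$-decomposition. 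The same observation shows that \emph{any} graph of maximum degree at most $2k$ admits a partition of its edges into $k$ sub-$2$-factors, a fact I shall reuse below. (For $\Delta=3$ the statement is Petersen's classical result that a bridgeless cubic graph has a perfect matching, so I assume $\Delta\ge5$ odd from now on.)

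The plan for odd $\Delta\ge5$ is to peel off one matching and then invoke the previous paragraph. Write $W$ for the set of vertices of $G$ of degree exactly $\Delta$. If $M$ is a matching of $G$ saturating every vertex of $W$, then $G-M$ has maximum degree at most $\Delta-1$ and hence partitions into $(\Delta-1)/2$ sub-$2$-factors; together with $M$, which serves as the required matching, this is a $\Delta$-decomposition of $G$. So everything reduces to the following claim, which is where the hypothesis enters and which I regard as the heart of the proposition: \emph{every cyclically $\Delta$-edge-connected graph $G$ with $\Delta(G)=\Delta$ ($\Delta$ odd) has a matching saturating $W$.}

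To prove the claim I would apply the defect form of Tutte's theorem for saturating a prescribed vertex set: either the required matching exists, or there is a set $S\subseteq V(G)$ for which the number $q$ of components $C$ of $G-S$ with $V(C)\subseteq W$ and $|V(C)|$ odd satisfies $q\ge|S|+1$. Assume the latter and let $C_1,\dots,C_q$ be these components. Two parity facts drive the argument. First, $S\neq\emptyset$: otherwise $G$ itself is one of the $C_i$, so $G$ is $\Delta$-regular of odd order, impossible since $\Delta$ is odd. Second, writing $e_i$ for the number of edges between $C_i$ and $S$, we have $\Delta\,|V(C_i)|=2\,|E(G[V(C_i)])|+e_i$ with $\Delta$ and $|V(C_i)|$ both odd, so $e_i$ is odd; hence $e_i\ge1$, and if $e_i\le\Delta-1$ then in fact $e_i\le\Delta-2$. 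Now $\sum_i e_i$ is at most the number of edges leaving $S$, hence at most $\Delta|S|$; writing $t$ for the number of $C_i$ with $e_i\ge\Delta$ this gives $\Delta|S|\ge\sum_i e_i\ge t\Delta+(q-t)$, whence $t\le|S|-1$ using $q\ge|S|+1$, so at least two of the $C_i$—say $C_1$ and $C_2$—satisfy $e_i\le\Delta-2$. Each such $C_i$ has at least $3$ vertices, all of degree $\Delta$, so it has $(\Delta|V(C_i)|-e_i)/2>|V(C_i)|-1$ edges (as $\Delta\ge5$) and therefore contains a cycle; moreover the edge-cut separating $V(C_1)$ from the rest of $G$ has fewer than $\Delta$ edges.

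It remains to turn this small cut into a contradiction with cyclic $\Delta$-edge-connectivity, and this is the delicate step: such connectivity forbids only small cuts that leave a cycle on \emph{both} sides, so one cannot argue directly with the cut around $C_1$. Instead, let $D$ be the component of $G-V(C_1)$ containing $C_2$. Every edge leaving $D$ runs into $C_1$, so the edge-cut separating $V(D)$ from the rest of $G$ is contained in the one around $C_1$ and thus still has fewer than $\Delta$ edges. A rerouting argument—replace each excursion of a path into $D$ by a detour through the connected graph $C_1$—shows that $G-V(D)$ is connected, and it contains the cycle found in $C_1$. Hence removing this cut splits $G$ into exactly two components, one of which ($G$ without the vertices of $D$) contains a cycle; as the cut has fewer than $\Delta$ edges, cyclic $\Delta$-edge-connectivity forces the other component, $D$, to be acyclic—contradicting the fact that the cycle of $C_2$ lies inside $D$. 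This contradiction establishes the claim, and hence the proposition. The only genuinely delicate point is this last manoeuvre; the two parity observations and the count producing two ``small'' components are routine.
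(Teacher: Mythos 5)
The paper cites this proposition from~\cite{KKSZ07} without reproducing a proof, so I cannot compare line by line; but your argument is correct and is the natural route, almost certainly the one used in the source. The even case via embedding in a $\Delta$-regular multigraph and Petersen's $2$-factor theorem is standard, and the reduction of the odd case to finding a matching saturating the set $W$ of maximum-degree vertices is exactly right: removing such a matching leaves maximum degree $\Delta-1$ (even), which decomposes into $(\Delta-1)/2$ sub-$2$-factors, giving $\lceil\Delta/2\rceil$ parts in total, one a matching. The defect form of Tutte's theorem for a prescribed vertex set that you invoke is a known result (it follows, e.g., from the Gallai--Edmonds structure theorem), and your two parity observations and the count $q-t\ge 2$ are clean. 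Passing from the cut around $C_1$ to the cut around the component $D$ of $G-V(C_1)$ containing $C_2$ is precisely the point one must not skip, and you identify and handle it correctly.

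One small stylistic remark: the ``rerouting'' justification that $G-V(D)$ is connected is a bit loose as phrased, since reconnecting a detour presupposes the very connectedness you are establishing. The crisper argument is structural: $G-V(D)$ consists of the connected subgraph $C_1$ together with the other components $D_2,\dots,D_m$ of $G-V(C_1)$, and each $D_j$ sends at least one edge into $C_1$ because $G$ is connected and $D_j$ has neighbours only inside $V(C_1)$; hence $G-V(D)$ is connected. With that phrasing the cut around $D$ is a genuine $2$-sided cyclic edge-cut of size at most $\Delta-2$, and the contradiction is immediate. Everything else is fine.
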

For the next step, we need the following lemma.
\begin{lemma}\label{lem-mod}
Fix a positive real number $\varepsilon$ and a positive integer
$\Delta\ge 4$. There is an integer $g$ such that for every graph $G$
with maximum
degree $\Delta$ and girth at least $g$, and for every sub-$2$-factor $F$ of
$G$ such that $\Delta(G-F)\le\Delta-2$, there exists a weighted
\tc $w$ of $G$ such that
\begin{itemize}
\item $\forall v\in V(G),\quad w[v]\ge\frac{1}{\Delta+\varepsilon}$; and
\item $\forall f\in E(F),\quad
w[f]\ge\frac{\Delta-1}{2(\Delta+\varepsilon)}$.
\end{itemize}
\end{lemma}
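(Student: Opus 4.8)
The plan is to build the desired weighted total coloring of $G$ out of two independent ingredients that are then combined: a good weighted proper \emph{vertex} coloring of $G$ coming from the classical result on fractional chromatic number of graphs with large girth, and a good weighted proper \emph{edge} coloring that treats the factor $F$ specially. First I would recall that graphs of maximum degree $\Delta$ and sufficiently large girth have fractional chromatic number arbitrarily close to $2$ (indeed, locally sparse graphs of bounded degree have fractional chromatic number close to $2$, a consequence of the Caro--Wei / Shearer-type bounds, or of the stronger Molloy--Reed local results). Hence, for girth at least some $g_1(\varepsilon,\Delta)$, $G$ admits a weighted proper vertex coloring $w_V$ with $w_V[v]\ge\tfrac12-\delta$ for every $v\in V(G)$, where $\delta$ is small compared with $\varepsilon$; equivalently, a probability distribution on independent sets of $G$ in which each vertex lies in a random independent set with probability at least $\tfrac12-\delta$.

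The second ingredient concerns edges. Here I would use the fact that $\Delta(G-F)\le\Delta-2$: the edges \emph{not} in $F$ form a graph of maximum degree at most $\Delta-2$, so $G-F$ has a proper edge coloring with $\Delta-1$ colors (Vizing), hence a weighted proper edge coloring in which every edge of $G-F$ gets weight $\ge\tfrac1{\Delta-1}$. Meanwhile the edges of $F$ lie in a disjoint union of paths and cycles; if the girth of $G$ is large, every cycle of $F$ is long, so $F$ has a proper edge coloring with colors $\{1,2\}$ that uses each color on (nearly) half the edges at each vertex, giving a weighted proper edge coloring of $F$ in which every $f\in E(F)$ gets weight $\ge\tfrac12-\delta'$ with $\delta'$ small once the girth exceeds some $g_2$. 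Now I would combine: take a random independent set $S_V$ of $G$ (from $w_V$) and, independently, a random proper edge coloring; more precisely I would form random \emph{total} independent sets by taking the union of a random vertex-independent set with a random color class of a suitable random edge coloring, checking that a vertex-independent set together with one edge-color-class is always a total independent set (an independent set in $V(G)$ together with a matching sharing no endpoint constraints beyond properness — here one must be slightly careful: a vertex $v\in S_V$ and an incident edge $f$ are adjacent in $\mathcal T(G)$, so the matching must avoid vertices of $S_V$). This last point is the one subtlety: the vertex part and the edge part are not automatically compatible.

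To handle this, I would not take $S_V$ and the edge class independently but rather \emph{sample the vertex-independent set first}, then delete from the chosen edge-color-class every edge incident to $S_V$; what remains is a matching disjoint from $S_V$, so the union is a genuine total independent set. One then checks that deleting these edges costs little in expectation: a fixed edge $f$ survives provided neither endpoint was chosen into $S_V$, which (using a union bound, or better the fact that $w_V$ can be taken to assign each vertex probability at most $\tfrac12+\delta$ as well) happens with probability at least, say, $\tfrac14-O(\delta)$ conditioned on $f$'s color class being selected. Tracking constants: a vertex $v$ ends up covered with weight $\ge\tfrac12-\delta$, an edge $f\in E(G-F)$ with weight $\ge c(\tfrac14-O(\delta))\cdot\tfrac1{\Delta-1}$ for an absolute constant $c$, and an edge $f\in E(F)$ with weight roughly $\bigl(\tfrac14-O(\delta)\bigr)\cdot\bigl(\tfrac12-\delta'\bigr)$ from each of its two allowed colors, totalling close to $\tfrac14$. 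Comparing with the targets $\tfrac1{\Delta+\varepsilon}$ for vertices and $\tfrac{\Delta-1}{2(\Delta+\varepsilon)}$ for edges of $F$: the vertex bound is comfortably met since $\tfrac12-\delta\ge\tfrac1{\Delta+\varepsilon}$ for $\Delta\ge4$, and one rescales (mixes the vertex-only sets, the edge-only sets, and the combined sets with carefully chosen weights summing to $1$) to balance the requirements. The main obstacle I anticipate is exactly this bookkeeping of weights so that \emph{all three} demands are met simultaneously after a single normalization: one needs enough slack from the near-$\tfrac12$ vertex coverage and the near-$\tfrac12$ coverage of $F$-edges to absorb the $\Delta-2$ versus $\Delta$ gap for the non-$F$ edges, and the girth threshold $g$ must be taken large enough for \emph{both} the fractional-chromatic-number bound and the long-cycle argument for $F$ to apply, with $\delta,\delta'$ pushed below a threshold depending on $\varepsilon$ and $\Delta$.
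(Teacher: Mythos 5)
Your proposal rests on two claims that do not hold, and the second one is fatal even if the first were repaired.

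First, the assertion that graphs of maximum degree $\Delta$ and sufficiently large girth have fractional chromatic number close to $2$ is false. Random $\Delta$-regular graphs of arbitrarily large girth have independence ratio $\Theta\bigl(\tfrac{\log\Delta}{\Delta}\bigr)$, hence fractional chromatic number $\Theta\bigl(\tfrac{\Delta}{\log\Delta}\bigr)$; this is the content of the Johansson--Kim and Shearer bounds you cite, and none of those results get anywhere near $2$ for bounded degree. So a random independent set in a vertex distribution can cover each vertex with probability on the order of $\tfrac{\log\Delta}{\Delta}$ at best, certainly not $\tfrac12-\delta$.

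Second, and more seriously, the combination step is arithmetically impossible by a local counting argument, regardless of how cleverly you sample. Fix a vertex $v$ of degree $\Delta$ with its two incident $F$-edges $f_1=vu_1$, $f_2=vu_2$. These three elements are pairwise adjacent in $\mathcal T(G)$, so $w[v]+w[f_1]+w[f_2]\le1$ for any weighted total coloring, and the lemma forces this sum to be at least $\tfrac{1}{\Delta+\varepsilon}+2\cdot\tfrac{\Delta-1}{2(\Delta+\varepsilon)}=\tfrac{\Delta}{\Delta+\varepsilon}=1-\tfrac{\varepsilon}{\Delta+\varepsilon}$. Now look at what your sampling scheme delivers. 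If you draw a vertex-independent set $S_V$ (with $\Pr[x\in S_V]=\beta_x\ge\tfrac{1}{\Delta+\varepsilon}$) and, independently, a matching $M$ of $F$ (with $\Pr[f\in M]\le\tfrac12$), and then prune $M$ against $S_V$, you get
\[
w[v]+w[f_1]+w[f_2]=\beta_v+\sum_{i=1}^2\Pr[f_i\in M]\cdot\Pr[v\notin S_V,\,u_i\notin S_V]\le\beta_v+\tfrac12\bigl((1-\beta_v-\beta_{u_1})+(1-\beta_v-\beta_{u_2})\bigr),
\]
using $\Pr[v\in S_V\text{ and }u_i\in S_V]=0$ since $vu_i$ is an edge. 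With $\beta_{u_1},\beta_{u_2}\ge\tfrac{1}{\Delta+\varepsilon}$ the right-hand side is at most $1-\tfrac{1}{\Delta+\varepsilon}$, which is strictly less than $1-\tfrac{\varepsilon}{\Delta+\varepsilon}$ whenever $\varepsilon<1$. So the scheme cannot meet the edge target for any $\varepsilon<1$, i.e., for precisely the regime the lemma is about; mixing in vertex-only or edge-only sets only dilutes further and cannot help, because the constraint I used is a hard clique bound in $\mathcal T(G)$. The product-and-prune idea simply does not coordinate the vertex and edge decisions tightly enough to saturate the clique around each vertex.

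The paper avoids this entirely: it embeds $G$ into an auxiliary graph $\widehat G$ with the same girth and maximum degree in which $F$ extends to a genuine $2$-factor $\widehat F$ (the embedding uses a high-girth $k$-regular graph to absorb the deficiency at each vertex), and then invokes Lemma~\ref{lem.18} from~\cite{KKK09}, whose construction already produces a distribution satisfying the tight local identity $(\Delta-2)\alpha+\beta+2\gamma=1$ with $\beta$ adjustable down to $\tfrac{1}{\Delta+\varepsilon}$. That tightness is exactly what your independent sampling cannot achieve.
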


Actually, Lemma~\ref{lem-mod} is implicit in~\cite{KKK09}. More
precisely, the following is proven.
\begin{lemma}[\protect{\cite[Lemma 18]{KKK09}}]\label{lem.18}
Let $\Delta\ge4$. For every $\varepsilon_0>0$, there exist $g\in\mathbb{N}$ and
$\alpha,\beta,\gamma \in \mathbb{R}^+$ satisfying 
$(\Delta-2)\alpha+\beta+2\gamma = 1$, $\alpha < \varepsilon_0$ such that
for every graph with maximum degree $\Delta$ and girth at least $g$, and
every $2$-factor $F$ of $G$, there exists a function $w:\Phi(G)\to[0,1]$
such that for every $x\in V(G)\cup E(G)$
\[
w[x]=\begin{cases}
  \beta&\text{if $x\in V(G)$,}\\
  \gamma&\text{if $x\in E(F)$,}\\
  \alpha&\text{otherwise.}
\end{cases}
\]
\end{lemma}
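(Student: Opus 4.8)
We plan to obtain $w$ as a probability distribution over a restricted subfamily of $\Phi(G)$, and to read off the prescribed values $w[x]$ from marginal probabilities. For a matching $M$ of $G$ for which $V(G)\setminus V(M)$ is independent, put $I_M:=M\cup(V(G)\setminus V(M))$; this is a total independent set in which a vertex $v$ appears exactly when $v$ is not covered by $M$, and an edge $e$ appears exactly when $e\in M$. If $w$ is supported on sets of this form, then for each vertex $v$ the events ``$v$ uncovered'' and ``$e\in M$'', over the edges $e$ incident to $v$, partition the probability space, so $w[v]+\sum_{e\ni v}w[e]=1$. As $F$ is a $2$-factor, each vertex is incident to exactly two edges of $F$; thus, once $w[e]=\gamma$ for $e\in E(F)$ and $w[e]=\alpha$ for $e\notin E(F)$, we automatically have $w[v]=1-2\gamma-(\deg_G(v)-2)\alpha$, which equals $\beta:=1-2\gamma-(\Delta-2)\alpha$ at vertices of degree $\Delta$ and exceeds it otherwise. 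The surplus at a vertex $v$ of degree $<\Delta$ is removed as a last step by deleting $v$ from $I_M$, independently and with the appropriate probability, in each outcome where $v$ is uncovered; this modifies no other membership event. Hence it is enough to produce a distribution over matchings $M$ with $V(G)\setminus V(M)$ independent under which $\Pr[e\in M]$ equals one fixed constant $\gamma$ on $E(F)$ and one fixed constant $\alpha<\varepsilon_0$ off $E(F)$; the identity $(\Delta-2)\alpha+\beta+2\gamma=1$ is then automatic.

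I would build such a distribution in two phases. First, write $F$ as a disjoint union of cycles $C_1,\dots,C_k$, each of length at least $g$, and choose for each $C_j$, independently, a random maximal matching $M_j$ of $C_j$ in which every edge is used with probability exactly $\gamma_0$ --- equivalently, every vertex of $C_j$ is uncovered with probability exactly $\beta_0:=1-2\gamma_0$. For a single constant $\gamma_0$ slightly larger than $\tfrac13$ such a distribution exists for all cycle lengths at least $g$: the uniform average over the $n$ rotations of a fixed maximal matching of $C_n$ with $t$ edges uses each edge with probability $t/n$, the attainable values of $t$ exhaust the integers in $[\lceil n/3\rceil,\lfloor n/2\rfloor]$, and mixing over $t$ realizes any prescribed rational in the interval $[\lceil n/3\rceil/n,\lfloor n/2\rfloor/n]$, which contains $\gamma_0$ as soon as $g$ is large (given $\beta_0$). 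Set $M_0:=\bigcup_j M_j$: a matching of $G$ with $\Pr[e\in M_0]=\gamma_0$ for every $e\in E(F)$, whose uncovered set is independent within each $F$-cycle but may contain adjacent pairs joined by edges outside $F$. Second, extend $M_0$ to a maximal matching $M$ of $G$ by a random greedy completion that only ever adds edges of $G-F$ (for instance, give i.i.d.\ uniform labels to $E(G)\setminus E(F)$ and repeatedly add the available edge of least label). Then $M$ is a maximal matching of $G$ with independent uncovered set, and because the completion never touches $F$ we still have $\Pr[e\in M]=\gamma_0$ for all $e\in E(F)$; put $\gamma:=\gamma_0$.

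An edge $e\notin E(F)$ lies in $M$ only if both its endpoints survive the first phase uncovered and $e$ then wins the greedy competition in its neighbourhood. The former event has probability of order $\beta_0^2$, since each endpoint is uncovered with probability $\beta_0$ and, the girth being large, the two endpoints sit in stretches of $F$-cycles that meet the neighbourhood of $e$ only along long paths, making their uncovered-statuses essentially independent. Consequently, taking $\beta_0$ small enough in terms of $\Delta$ and $\varepsilon_0$, then $\gamma_0=(1-\beta_0)/2$, then $g$, forces $0<\alpha<\varepsilon_0$ and $\beta=1-2\gamma_0-(\Delta-2)\alpha>0$; applying the vertex-deletion adjustment of the first paragraph then yields the desired $w$.

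The delicate point --- and the one I expect to be the real obstacle --- is the requirement implicit in the last paragraph that $\Pr[e\in M]$ be literally one and the same constant $\alpha$ for \emph{every} edge outside $F$, not merely approximately so, since the lemma prescribes exact marginal values. The greedy completion respects no symmetry of $G$ a priori, so exact uniformity must be extracted from the strictly local, tree-like structure provided by the girth hypothesis: by Lemma~\ref{lem.conn} a bounded-radius neighbourhood of any edge is a forest, and in fact --- since every vertex carries two $F$-edges that link up into non-branching paths --- an isomorphic copy of the $\Delta$-regular tree decorated in the unique-up-to-isomorphism way, so any quantity defined by a local rule is constant across all edges of a given type. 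Turning the greedy completion into such a local rule means truncating the (exponentially rare) long competition chains and checking that this truncation can be carried out identically everywhere and with no effect on the probabilities in question once $g$ is large enough; this bookkeeping, together with promoting the ``essentially independent'' estimate above to an exact one, is where the effort concentrates, and it is exactly what the large-girth hypothesis is designed to make possible.
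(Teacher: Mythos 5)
A point of context first: the paper does not prove this lemma at all --- it is imported verbatim from \cite[Lemma 18]{KKK09} --- so there is no internal proof to compare against, and I am judging your argument on its own merits. Your reduction is sound as far as it goes: restricting to total independent sets of the form $M\cup(V(G)\setminus V(M))$ for maximal matchings $M$ correctly turns the lemma into the problem of finding a distribution on maximal matchings whose edge-marginals are \emph{exactly} $\gamma$ on $E(F)$ and \emph{exactly} one constant $\alpha<\varepsilon_0$ on $E(G)\setminus E(F)$, and the final vertex-deletion adjustment for vertices of degree less than $\Delta$ is legitimate. But you have not closed that problem, and you say so yourself: the exactness of the marginal $\alpha$ on every edge outside $F$ is not a bookkeeping detail, it is the entire content of the lemma, and it is exactly the step you leave as a sketch.

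Moreover, the sketch as written would not go through. First, the ``decorated $\Delta$-regular tree'' argument needs every bounded-radius neighbourhood to be isomorphic as a decorated graph; that holds only when $G$ is $\Delta$-regular, whereas the lemma allows degrees anywhere between $2$ and $\Delta$, so an edge of $G-F$ with an endpoint of degree $3$ and one with both endpoints of degree $\Delta$ have genuinely different local pictures and your greedy completion assigns them different probabilities (your deletion trick repairs vertex marginals, not edge marginals). Second, truncating the greedy competition chains changes the inclusion probability of an edge by a nonzero amount that depends on the graph beyond the truncation radius, so a local rule can deliver approximate uniformity only, never the exact equality the lemma demands. Third, already in phase one the uniform-rotation distribution on a cycle has long-range periodic correlations: for an edge $e=uv\notin E(F)$ whose endpoints lie on the same $F$-cycle, the probability that both are uncovered is not $\beta_0^2$ and depends on their distance along the cycle modulo the period of the rotated matching. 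The actual proof in \cite{KKK09} takes a substantially different and more technical route --- an analytic recursion over paths and trees that produces the functions $F$ and $Q$ alluded to in the present paper --- precisely because exact marginals cannot be extracted from a truncated greedy process.
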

Actually, in Lemma 18, the parameter $\alpha$ is upper-bounded by
$4/3\ell$, with $\ell$ depending only on the girth of $G$; hence, it suffices to take $g$ sufficiently
large to obtain the inequality stated in Lemma~\ref{lem.18}.

As a crucial observation, Kaiser et al.~\cite[Proposition 19]{KKK09} proved
that in Lemma~\ref{lem.18} (Lemma 18 in~\cite{KKK09}),
the parameter $\beta$ can be chosen to be any value from the interval $(\varepsilon_0,Q(1))$
for a function $Q$ defined in~\cite{KKK09}. A more careful upper bound of one
of the parameters in the proof of their Proposition 19 allows us to
ensure that $\beta$ can be chosen to be $\frac1{\Delta+\varepsilon}$: it
suffices to prove that $Q(1)\ge\frac{1}{\Delta}$. For
$\Delta\in\{4,5,6\}$ it can be checked directly, and for $\Delta\ge 7$
we have
\[
Q(1)=\frac{1-F(1)^2}{2} \qquad\text{where} \qquad F(1) \le
1-\frac1{\Delta-2}\,.
\]
Since it holds that 
\[
1-\frac1{\Delta-2}\le \sqrt{1-\frac2\Delta}\,,
\]
the inequality $Q(1)\ge\frac{1}{\Delta}$ follows.

We are now ready to explain how to derive Lemma~\ref{lem-mod} from Lemma~\ref{lem.18}.
\begin{proof}[Proof of Lemma~\ref{lem-mod}]
Set $\varepsilon_0:=\frac{\varepsilon}{(\Delta-2)(\Delta+\varepsilon)}$,
and let $g$, $\alpha,\beta,\gamma$ be the constants given by
Lemma~\ref{lem.18}.  Let $G$ be a graph with maximum degree $\Delta$ and
girth at least $g$, and $F$ be a sub-$2$-factor of $G$.  We first build an
auxiliary graph $\widehat{G}$ as follows.

Set $k:=\sum_{v\in V(G)}(2-\deg_F(v))$. We view $G$ as the subgraph of
the multi-graph $G'$ obtained by adding to
$G$ a new vertex $v_0$ and, for each $v\in V(G)$, adding
$2-\deg_F(v)$ edges between $v_0$ and $v$. Thus, $\deg_{G'}(v_0)=k$ and
every other vertex of $G'$ has degree at most $\Delta$, since
$\Delta(G-F)\le\Delta-2$.
Let $H$ be a $k$-regular graph of girth at least $g$. The existence of
such a graph is well-known, consult, e.g., the book by Lov\'asz~\cite[Solution
of Problem 10.12]{Lov07}.
Replace every vertex $x$ of $H$ by a copy $G_x$ of $G$ and identify the
$k$ edges incident with $x$ in $H$ with the $k$ edges of $G'$ incident
with $v_0$.
Let $\widehat{G}$ be the obtained graph.
For $x\in V(H)$, let $F_x$ be the set of edges of $G_x$ corresponding to
the edges of $G$ that are in $F$.
Define $\widehat{F}$ to be the union of
$\bigcup_{x\in V(H)}F_x$ and the edges corresponding to those of $H$;
thus $\widehat{F}$ is a $2$-factor of $\widehat{G}$.

By the construction, $\widehat{G}$ has maximum degree $\Delta$
and girth at least $g$.
Thus, Lemma~\ref{lem.18} ensures the existence of a weighted total coloring
$w$ of $\widehat{G}$ such that $\alpha\le\varepsilon_0$, the weight of the
vertices is $\beta=\frac1{\Delta+\varepsilon}$ and the weight of the
edges in $\widehat{F}$ is
\[
\gamma = \frac12\left(1-(\Delta-2)\alpha-\beta\right)
\ge
\frac12\left(1-\frac{\varepsilon}{\Delta+\varepsilon}-\frac{1}{\Delta+\varepsilon}\right)
= \frac{\Delta-1}{2(\Delta+\varepsilon)}\,.
\]
This yields the conclusion since $\widehat{G}$ contains $G$ as a
subgraph and $\widehat{F}$ contains $F$.
\end{proof}
We conclude the section by proving Theorem~\ref{thm-main} restricted to
\cec graphs of maximum degree $\Delta$.
\begin{lemma}\label{prop.1}
Let $\Delta$ be an odd integer and $\varepsilon$ a positive real.
There exists $g$ such that the fractional total chromatic number of
every \cec graph with maximum degree $\Delta$ and girth at least $g$
is at most $\Delta+1+\varepsilon$.
\end{lemma}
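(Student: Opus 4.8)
The plan is to combine the decomposition given by Proposition~\ref{prop-kksz} with the weighted colorings produced by Lemma~\ref{lem-mod}, averaging several ``rotated'' copies of the latter so that every element of the total graph receives total weight at least $\frac{1}{\Delta+1+\varepsilon}$. Concretely, given a \cec graph $G$ with odd $\Delta\ge5$ and girth at least $g$ (with $g$ large enough for Lemma~\ref{lem-mod} to apply, for a value of $\varepsilon$ to be calibrated below), first apply Proposition~\ref{prop-kksz} to obtain a $\Delta$-decomposition: a partition of $E(G)$ into sub-$2$-factors $F_1,\dots,F_{(\Delta-1)/2}$ together with a perfect-or-near-perfect matching $M$. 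For each index $i$, the graph $G-F_i$ has maximum degree at most $\Delta-2$, so Lemma~\ref{lem-mod} yields a weighted total coloring $w_i$ of $G$ with $w_i[v]\ge\frac{1}{\Delta+\varepsilon'}$ for every vertex $v$ and $w_i[f]\ge\frac{\Delta-1}{2(\Delta+\varepsilon')}$ for every edge $f\in E(F_i)$, where $\varepsilon'$ is the auxiliary accuracy parameter. The matching $M$ needs separate treatment: a proper edge coloring argument (or a direct greedy/fractional argument on a graph of large girth) gives a weighted total coloring $w_0$ assigning weight close to $1$ to every edge of $M$ and weight bounded below to the vertices; alternatively one observes $\Delta(G-M)\le\Delta-1$ and invokes a companion statement.

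The second step is to form the convex combination $w=\sum_{i} \lambda_i w_i$ with nonnegative coefficients summing to $1$, chosen so that every vertex, every edge of each $F_i$, and every edge of $M$ ends up with weight at least $\frac{1}{\Delta+1+\varepsilon}$, which by the weighted/fractional duality recalled in Section~\ref{sec.def} is exactly a fractional $(\Delta+1+\varepsilon)$-total coloring. The arithmetic to check is: a vertex $v$ gets $\sum_i\lambda_i w_i[v]\ge \frac{1}{\Delta+\varepsilon'}$, which already beats $\frac{1}{\Delta+1+\varepsilon}$ provided $\varepsilon'$ is small; an edge $f$ lying in $F_j$ gets at least $\lambda_j\cdot\frac{\Delta-1}{2(\Delta+\varepsilon')}$ from the copy that protects $F_j$, plus whatever the other copies contribute, so one needs the $\lambda_j$ and the number $(\Delta-1)/2$ of sub-$2$-factors to balance: taking all $\lambda_j$ equal to a common value $\lambda$ over the $(\Delta-1)/2$ factors (with a separate $\lambda_0$ for the matching copy) the edge-in-$F_j$ weight is roughly $\lambda\cdot\frac{\Delta-1}{2(\Delta+\varepsilon')}$, and since $\tfrac{\Delta-1}{2}\cdot\tfrac{2}{\Delta-1}=1$ this is where the factor $\frac{\Delta-1}{2(\Delta+\varepsilon)}$ in Lemma~\ref{lem-mod} was engineered to cancel correctly, yielding the target bound after absorbing the slack into $\varepsilon$. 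One then fixes $\varepsilon'$ (hence $g$) at the very start as an explicit function of $\varepsilon$ and $\Delta$ so that all these inequalities hold simultaneously.

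The main obstacle I anticipate is the matching part $M$ and, relatedly, making the bookkeeping of the convex combination tight enough: the sub-$2$-factor copies are handled uniformly by Lemma~\ref{lem-mod}, but an odd-$\Delta$ decomposition necessarily contains a genuine matching, and one must guarantee its edges receive weight $\ge\frac{1}{\Delta+1+\varepsilon}$ without stealing too much measure from the vertices or the $F_i$-edges. The cleanest route is probably to treat the matching as a degenerate sub-$2$-factor and apply the same lemma (since $\Delta(G-M)\le\Delta-1\le\Delta-2$ fails by one, so one instead pads $M$ to a sub-$2$-factor $F_0$ with $\Delta(G-F_0)\le\Delta-2$, which is possible when $\Delta$ is odd because the remaining even-degree part decomposes further), and then the uniform averaging argument above goes through; verifying that this padding is always available, and that the resulting $\varepsilon$-loss is controlled, is the delicate point. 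Everything else is the routine duality bookkeeping and a choice of girth parameter $g$ large enough to accommodate Lemma~\ref{lem-mod} for the chosen $\varepsilon'$.
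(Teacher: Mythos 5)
Your overall strategy—decompose $G$ via Proposition~\ref{prop-kksz} into a matching $M$ and $k=(\Delta-1)/2$ sub-$2$-factors, run Lemma~\ref{lem-mod} on each sub-$2$-factor, and combine by a convex combination—is exactly the paper's plan. The gap is in the treatment of $M$, which you yourself flag as the delicate point, and neither of your two proposed escape routes works.

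First, $\Delta(G-M)\le\Delta-1$ does \emph{not} meet the hypothesis $\Delta(G-F)\le\Delta-2$ of Lemma~\ref{lem-mod}, so there is no ``companion statement'' to invoke. Second, and more importantly, the padding idea cannot succeed for arithmetic reasons, independently of whether the padding itself can always be carried out. Suppose you pad $M$ to a sub-$2$-factor $F_0$ and apply Lemma~\ref{lem-mod} to all of $F_0,F_1,\dots,F_k$, taking a convex combination with coefficients $\lambda_0,\lambda,\dots,\lambda$ (with $\lambda_0+k\lambda=1$). Each application of Lemma~\ref{lem-mod} guarantees an edge of its distinguished factor weight at most roughly $\tfrac{\Delta-1}{2\Delta}=\tfrac{k}{2k+1}<\tfrac12$, so to reach the target $\tfrac{1}{2k+2}$ on an edge of $F_i$ you need each coefficient to be at least about $\tfrac{2k+1}{2k(k+1)}$; but then
\[
\lambda_0+k\lambda\;\ge\;(k+1)\cdot\frac{2k+1}{2k(k+1)}\;=\;\frac{2k+1}{2k}\;>\;1,
\]
which is impossible. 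The budget of $1$ is not enough to feed $k+1$ factors through Lemma~\ref{lem-mod}.

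The observation you are missing is simple but crucial: a matching is itself a total independent set, so the map $w_0'$ assigning weight $1$ to the single set $M$ and $0$ elsewhere is already a valid weighted total coloring with $w_0'[e]=1$ for every $e\in M$. This is far more than Lemma~\ref{lem-mod} could ever give, and it lets the paper use the tiny coefficient $\lambda_0=\tfrac{1}{2k+2}$ for $w_0'$, leaving $\tfrac{2k+1}{2k+2}$ to be split evenly (as $\tfrac{2k+1}{2k(k+1)}$ each) among the $k$ copies coming from Lemma~\ref{lem-mod}. That is what makes the bookkeeping close; without it the convex combination provably cannot be balanced.
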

\begin{proof}
We may assume that $\Delta\ge5$. Set $k:=\lfloor\Delta/2\rfloor$ and
$\varepsilon':=\varepsilon/2$.
Let $g$ be large enough so that Lemma~\ref{lem-mod} holds for the
fixed values of $\Delta$ and $\varepsilon'$.

By Proposition~\ref{prop-kksz}, the graph $G$ has a
$\Delta$-decomposition $M,F_1,F_2,\ldots,F_k$ where
$M$ is a matching.
In particular, $\Delta(G-F_i)\le\Delta-2$ for
$i\in\{1,\ldots,k\}$.
For every $i\in\{1,2,\ldots,k\}$, let $w_i'$ be
a weighted \tc of $G$ given by
Lemma~\ref{lem-mod} applied to $G$ and $F_i$ with respect to $\Delta$ and $\varepsilon'$. Further, let $w_0'$
be the weighted \tc of $G$ that assigns $1$ to the set $M$ and $0$ to every other
total independent set of $G$.
Finally, set $w_i:=\frac{2k+1}{2k(k+1)}\cdot w_i'$ for $i\in\{1,2\ldots,k\}$
and $w_0:=\frac{1}{2k+2}\cdot w_0'$.

A weighted $1/(\Delta+1+\varepsilon)$-coloring $w:\Phi(G)\to\mathbb{R}$
is defined by setting
$w:=\sum_{i=0}^{k}w_i$. Note that $w$ is a weighted \tc of $G$ since it
is a convex combination of weighted \tcs of $G$.

It remains to show that $w[x]$ is at least
$\frac{1}{\Delta+1+\varepsilon}=\frac{1}{2k+2+\varepsilon}$ for
every $x\in V(\mathcal{T}(G))$.
Let $v\in V(G)$. Then,
$w_i[v]\ge\frac{1}{\Delta+\varepsilon'}=\frac{2}{2(2k+1)+\varepsilon}$
for each $i\in\{1,2,\ldots,k\}$, and hence
\begin{align*}
w[v]&\ge\sum_{i=1}^{k}w_i[v]\\
&\ge k\cdot\frac{2k+1}{2k(k+1)}\cdot\frac{2}{2(2k+1)+\varepsilon}\\
&=\frac{1}{(k+1)\cdot\left(2+\frac{\varepsilon}{2k+1}\right)}\\
&>\frac{1}{2k+2+\varepsilon}\,.
\end{align*}
Now, let $e\in E(G)$. If $e\in M$, then
$w[e]\ge w_0(M)=\frac{1}{2k+2}$.
Otherwise,
there exists a unique $i\in\{1,2,\ldots,k\}$ such that $e\in E(F_i)$.
Then,
\[
w[e]\ge w_i[e]\ge\frac{2k+1}{2k(k+1)}\cdot\frac{2k}{2(2k+1+\varepsilon')}=\frac{1}{(k+1)\cdot\left(2+\frac{\varepsilon}{2k+1}\right)}>\frac{1}{2k+2+\varepsilon}\,.
\]
This concludes the proof.
\end{proof}

\section{The general case}
\label{sec.gen}
We start with an auxiliary lemma regarding recoloring of trees.
\begin{lemma}\label{lem.exttree}
Fix $\varepsilon>\varepsilon'>0$ and a positive integer $\Delta$.
There exists an integer $d$ such that the following holds.
\begin{itemize}
\item For every
tree $T$ rooted at a leaf $r$, with maximum degree $\Delta$ and depth
$d$,
\item for every fractional $(\Delta+1+\varepsilon')$-total coloring $c_0$ of $T$ 
with colors contained in
$[0,\Delta+1+\varepsilon']$, and
\item for every pair of disjoint sets
$X,Y\subset[0,\Delta+1+\varepsilon]$ each of measure $1$,
\end{itemize}
there exists a fractional $(\Delta+1+\varepsilon)$-total coloring $c$ of $T$ 
such that
\begin{enumerate}
\item $c(r)=X$ and $c(rr')=Y$ where $r'$ is the unique neighbor of $r$
in $T$, and
\item $c$ agrees with $c_0$ on all leaves and edges incident with leaves
that are at distance $d$ from $r$.
\end{enumerate}
\end{lemma}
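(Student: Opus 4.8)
The plan is to recolor $T$ greedily from the root toward the leaves, processing the tree level by level, so that at each vertex we have enough ``free'' measure in the palette $[0,\Delta+1+\varepsilon]$ to reassign colors while respecting the constraint imposed by the already-recolored parent, and so that the perturbation introduced at the root decays to nothing by the time we reach depth $d$. First I would fix $d$ large; the precise value will be dictated by the decay estimate below. Set $\delta:=\varepsilon-\varepsilon'>0$. The key quantity to track is, for each vertex $v$ (resp.\ edge $e$) processed at level $i$, the measure $\mu(c(v)\triangle c_0(v))$ of the symmetric difference between the new color set and the old one; call this the \emph{defect} at level $i$. At the root the defect is at most $2$ (since $\mu(X)=\mu(c_0(r))=1$, and similarly for the edge $rr'$). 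I would show that the defect can be made to shrink by a constant factor, or at least decrease additively by a fixed amount, from one level to the next, using the extra slack $\delta$ in the total size of the palette. Once the defect reaches $0$ we simply keep $c=c_0$ on the remaining subtree; choosing $d$ large enough that this happens before level $d$ gives conclusion~(2), while conclusion~(1) is built in as the base case of the recursion.

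The heart of the argument is the single-level recoloring step. Suppose $v$ has been recolored to $c(v)$ and the edge $pv$ to $c(pv)$, where $p$ is the parent of $v$, and suppose the combined defect $\mu(c(v)\triangle c_0(v))+\mu(c(pv)\triangle c_0(pv))$ is some small quantity $\eta$. Let $w_1,\dots,w_t$ (with $t\le\Delta-1$) be the children of $v$, and $e_j=vw_j$ the corresponding edges. In the old coloring $c_0$, the sets $c_0(v),c_0(pv),c_0(e_1),\dots,c_0(e_t),c_0(w_1),\dots,c_0(w_t)$ are pairwise ``almost disjoint'' exactly according to the adjacency pattern in $\mathcal{T}(T)$: $c_0(v)$ meets none of them, $c_0(e_j)$ meets $c_0(pv)$, all other $c_0(e_{j'})$, and $c_0(w_j)$, and $c_0(w_j)$ meets $c_0(e_j)$ only. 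I would recolor, for each $j$ in turn, first the edge $e_j$ and then the vertex $w_j$: for $e_j$ I must avoid $c(v)$, $c(pv)$, the already-chosen $c(e_1),\dots,c(e_{j-1})$, and the old sets $c_0(e_{j+1}),\dots,c_0(e_t)$ and $c_0(w_j)$ that will constrain later choices---a total forbidden measure of at most $(\Delta+1)+\eta+(\text{small})$, leaving room of at least $\delta-(\text{small})$ to pick a new set of measure $1$ that differs from $c_0(e_j)$ on as little as possible; a short averaging/greedy computation shows the new defect at $e_j$ is $O(\eta+1/d)$-type small, and crucially can be forced strictly below $\eta$ when $\eta$ itself is not already tiny. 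The same bookkeeping handles $w_j$, which is even easier since it is adjacent only to $e_j$ among the local sets. Summing over the (at most $\Delta-1$) children and setting up the recursion on the maximum defect over a level yields geometric decay of the defect down the tree.

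The main obstacle I anticipate is making the per-level decay estimate genuinely contractive rather than merely bounded: a naive greedy choice at $e_j$ could inherit essentially all of the parent's defect plus a contribution from each previously recolored sibling edge, so that the defect grows by a factor of $\Delta$ per level instead of shrinking. The fix is to exploit that we are not forced to match $c_0(e_j)$ as closely as possible in an arbitrary way; rather, we have $\delta$ units of genuinely free color that never appear in the old coloring near $v$, and we can ``park'' the discrepancy there, so that the recolored $c(e_j)$ agrees with $c_0(e_j)$ on a set whose complement has measure depending only on $\delta$ and $\Delta$, independently of the incoming defect $\eta$ once $\eta$ exceeds that threshold. Formalizing this---choosing at each level a small, level-dependent ``budget'' and showing the budgets can be chosen summing to at most $\delta$ over the $d$ levels while each individual budget suffices for the local recoloring---is the technical core, and it is exactly here that taking $d$ large (so each level's budget $\approx\delta/d$ is small but the number of levels is enough to absorb the initial defect $2$) is used. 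With the budgets in hand the verification that the resulting $c$ is a proper fractional $(\Delta+1+\varepsilon)$-total coloring is immediate from the construction, and conditions~(1) and~(2) hold by design.
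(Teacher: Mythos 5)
Your high-level intuition is right: the $\delta:=\varepsilon-\varepsilon'$ slack in the palette must be exploited one level at a time, so $d$ on the order of $(\Delta+1)/\delta$ suffices, and the ``free'' colors $I_0=(\Delta+1+\varepsilon',\Delta+1+\varepsilon]$ never used by $c_0$ are the place where perturbations are absorbed. You also correctly anticipate the pitfall of a naive greedy step. However, the per-vertex greedy-plus-defect-tracking mechanism you propose to resolve it has a genuine gap, which I do not think is merely a matter of ``formalizing the budgets.''

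Concretely, write $\eta_x:=\mu(c(x)\setminus c_0(x))$, so the symmetric-difference defect is $2\eta_x$. When recoloring a child edge $e_j=vw_j$, the colors forced out of $c_0(e_j)$ are exactly the points of $c_0(e_j)$ lying in the newly chosen $c(v)\cup c(pv)$; since $c_0(e_j)$ avoided the old sets $c_0(v),c_0(pv)$, this forced removal has measure up to $\eta_v+\eta_{pv}$, and therefore $\eta_{e_j}$ can be as large as $\eta_v+\eta_{pv}$. Parking the replacement in $I_0$ does not reduce this: it only prevents sibling edges from interfering with each other through $c_0$. Worse, since the $\Delta-1$ siblings must have pairwise disjoint replacements inside $I_0$ (they all meet $v$), their total parked mass must be at most $\delta$. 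Near the root the incoming defect $\eta_v+\eta_{pv}$ is of order $1$, so the parked mass $(\Delta-1)(\eta_v+\eta_{pv})$ does not fit in $I_0$ at all; the greedy step simply cannot be carried out. And even where it can, the bound $\eta_{e_j}\le\eta_v+\eta_{pv}$ gives at best a non-increase, not the claimed contraction or additive decrease, so the defect never reaches~$0$. Your statement that $\eta_{e_j}$ can be made bounded by a function of $\delta,\Delta$ ``independently of the incoming defect $\eta$'' is not correct: if $c_0(e_j)$ is largely contained in $c(v)\setminus c_0(v)$, the forced removal, and hence $\eta_{e_j}$, is close to $\min(1,\eta)$.

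What the paper does instead sidesteps defect bookkeeping entirely. It fixes one target isometry $\pi$ of $[0,\Delta+1+\varepsilon]$ with $\pi(c_0(r))=X$, $\pi(c_0(rr'))=Y$, and then builds a chain $\pi_0=\mathrm{id},\pi_1,\ldots,\pi_{2s}$ of isometries of the whole palette, applying $\pi_i$ uniformly to every element of level $i$. Consecutive isometries differ only by a transposition that shuttles a measure-$\delta$ piece of $[0,\Delta+1+\varepsilon']$ into and out of the current image of $I_0$, so that $\pi_{i+1}(c_0(y))\subseteq\pi_i(c_0(y))\cup\pi_i(I_0)$. Propriety across adjacent levels then follows for free, because $\pi_i(c_0(x))$ is disjoint both from $\pi_i(c_0(y))$ (same-level disjointness pushed through a bijection) and from $\pi_i(I_0)$ (since $c_0(x)$ never meets $I_0$). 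There is no greedy choice, no interference between siblings, and no defect to control; the interpolation from $\mathrm{id}$ to $\pi$ is global and level-uniform. If you want to rescue your approach, the fix is exactly to replace per-vertex greedy choices by a single level-dependent isometry applied to $c_0$, and to prove the containment property above instead of trying to make a defect shrink.
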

\begin{proof}
Without loss of generality,
we may assume that $T$ is $\Delta$-regular, i.e., every vertex of $T$ has
degree either $1$ or $\Delta$, and all leaves are at distance $d$ from
$r$.
Set $\delta:=\varepsilon-\varepsilon'$,
$s:=\left\lceil\frac{\Delta+1+\varepsilon'}{\delta}\right\rceil$
and $d:=2s+1$.
We partition the vertices and edges of $T$ into levels regarding their
distance (in the total graph of $T$) from $r$. More precisely,
the root $r$ has level $d$, and
for every $i\ge0$, the level of a vertex at distance $i$ (in $T$) from
$r$ is $d-i$. Thus, the leaves of $T$ distinct from $r$ have level
$0$. The level of an edge that joins a vertex
of level $i$ with a vertex of level $i+1$ is $i$.

We fix an isometry $\pi$ of $[0,\Delta+1+\varepsilon]$ such that
$\pi(c_0(r))=X$ and $\pi(c_0(rr'))=Y$.
This is possible since $\mu(c_0(r))=\mu(c_0(rr'))=\mu(X)=\mu(Y)=1$ and
$c_0(r)\cap c_0(rr')=\emptyset=X\cap Y$.
Let $I_1,\ldots,I_s$ be a partition of $[0,\Delta+1+\varepsilon']$ into
sets of measure $\delta$, except $I_s$ which may have
a smaller measure; let $I_0=(\Delta+1+\varepsilon',\Delta+1+\varepsilon]$.
We set $K_k:=\pi(I_k)$ for $k\in\{1,\ldots,s\}$.

Let $\pi_0$ be the identity mapping of $[0,\Delta+1+\varepsilon]$.
We use a finite
induction to define a sequence $\pi_1,\ldots,\pi_{2d}$ of isometries of
$[0,\Delta+1+\varepsilon]$ such that for each $k$, the isometries
$\pi_{2k}$ and $\pi$ agree on the sets $I_1,\dots, I_k$.
The definition may be better digested with a look at Figure~\ref{fig.1}.

\begin{figure}
\centering
\includegraphics{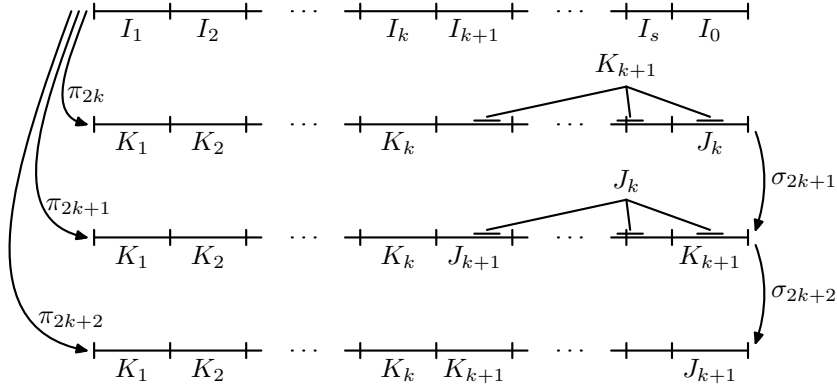}
\caption{An illustration of the actions of the isometries $\pi_{2k+1}$
and $\pi_{2k}$. The colors in $K_{k+1}$ are not used on level $2k+1$, and the
colors in $J_{k+1}$ are not used on level $2k+2$. Moreover, $\pi_{2k}$ and $\pi$
agree on $I_1\cup\ldots\cup I_k$.}
\label{fig.1}
\end{figure}

Let $k\in\{0,\dots,s-2\}$ and assume that
$\pi_{2k}$ is an isometry of $[0,\Delta+1+\varepsilon]$ such that
\[
\forall i\in\{1,\ldots,k\},\quad \forall t\in I_i,\quad
\pi_{2k}(t)=\pi(t)\,.
\]
Let $J_k:=\pi_{2k}(I_0)$, so $J_0=I_0$. 

In the odd step, we fix an isometry $\sigma_{2k+1}:J_k\to K_{k+1}$ such that
the restriction of $\sigma_{2k+1}$ to $J_k\cap K_{k+1}$ is the
identity mapping. The isometry $\pi_{2k+1}$ of
$[0,\Delta+1+\varepsilon]$ is defined by
\[
\pi_{2k+1}(t):=\begin{cases}
  \sigma_{2k+1}(\pi_{2k}(t))&\text{if $t\in\pi_{2k}^{-1}(J_k)=I_0$,}\\
  \sigma_{2k+1}^{-1}(\pi_{2k}(t))&\text{if $t\in\pi_{2k}^{-1}(K_{k+1})$,}\\
  \pi_{2k}(t)&\text{otherwise.}
\end{cases}
\]
For $i\in\{1,\ldots,k\}$, if $t\in I_i$ then
$t\notin\pi_{2k}^{-1}(K_{k+1})$ since $\pi_{2k}(t)=\pi(t)\in K_i$, and
$t\notin I_0$ since $I_0\cap I_i=\emptyset$.
Therefore, 
\[
\forall i\in\{1,\ldots,k\}, \quad \forall t\in I_i, \quad
\pi_{2k+1}(t)=\pi_{2k}(t)=\pi(t)\,.
\]
Note that $\pi_{2k+1}(I_0)=K_{k+1}$.
Let $J_{k+1}:=\pi_{2k+1}(I_{k+1})$. Since $\pi_{2k+1}$ is an isometry,
$J_{k+1}\cap K_i=\emptyset$ for all $i\in\{1,\dots,k+1\}$. 

In the even step, we first define the isometry
$\sigma_{2k+2}:J_{k+1}\to K_{k+1}$ by the condition
$\sigma_{2k+2}(\pi_{2k+1}(t))=\pi(t)$ for every $t\in I_{k+1}$. Next, the isometry $\pi_{2k+2}$ of $[0,\Delta+1+\varepsilon]$ is defined by
\[
\pi_{2k+2}(t):=\begin{cases}
  \sigma_{2k+2}(\pi_{2k+1}(t))&\text{if $t\in I_{k+1}$,}\\
  \sigma_{2k+2}^{-1}(\pi_{2k+1}(t))&\text{if $t\in I_0$,}\\
  \pi_{2k+1}(t)&\text{otherwise.}
\end{cases}
\]
It follows directly from the definition of $\pi_{2k+2}$ that
\[
\forall i\in\{1,\dots,k+1\},\quad \forall t\in I_i,\quad
\pi_{2k+2}(t)=\pi(t)\,.
\]
Notice that $\pi_{2k+2}(I_0)=J_{k+1}$.

When $k=s-1$, the set $K_{k+1}=K_s$ can have measure smaller than
$\delta$. We partition $J_{s-1}:=\pi_{2s-2}(I_0)$ into two sets $J_{s-1}^\prime$ and
$J_{s-1}^{\prime\prime}$ such that $\mu(J_{s-1}^\prime)=\mu(K_s)$. We
then continue in
the same manner and define an isometry $\pi_{2s-1}$ that
agrees with $\pi$ on all the sets $I_i$ with $i\in\{1,\ldots,s-1\}$ and
$\pi_{2s-1}(I_0)=K_s\cup J_{s-1}^{\prime\prime}$. Finally, we define an
isometry $\pi_{2s}$ that agrees with $\pi$ on
$[0,\Delta+1+\varepsilon']$.

Let us now define a coloring $c:V(T)\cup E(T)\to 2^{[0,\Delta+1+\varepsilon]}$ as follows:
\[
c(x):=\begin{cases}
  \pi_{i}(c_0(x))&\text{if $x$ has level $i\in\{0,\dots,2s\}$,}\\
  X&\text{if $x=r$.}
  \end{cases}
\]

Since $\pi_i$ is an isometry for $i\in\{0,\ldots,2s\}$ and $c_0$ is a fractional
total coloring of $T$, we have $\mu(c(x))\ge1$ for all $x\in V(T)\cup E(T)$.
To prove that $c$ is a fractional $(\Delta+1+\varepsilon)$-total
coloring of $T$ it suffices to prove that $c(x)\cap c(y)=\emptyset$ if $x$ and
$y$ are adjacent or incident in $T$. The levels of $x$ and $y$ can differ
by at most $1$. If the levels of $x$ and $y$ are the same, then
$c_0(x)\cap c_0(y)=\emptyset$ since $c_0$ is a fractional total coloring of
$T$. Hence,
\[
c(x)\cap c(y)=\pi_i(c_0(x))\cap\pi_i(c_0(y))=\emptyset\,.
\]
Let $x$ be a vertex or an edge of level $i$ and let $y$ be a vertex or
an edge adjacent to $x$ of level $i+1$, with $i\in\{0,\dots,2s-1\}$. Since
$c_0(x)\cap c_0(y)=\emptyset$, we have $\pi_i(c_0(x))\cap
\pi_i(c_0(y))=\emptyset$
as well. Since $c_0$ uses only colors from the interval
$[0,\Delta+1+\varepsilon']$, we also have $\pi_i(c_0(x))\cap
\pi_i(I_0)=\emptyset$.
The isometry $\sigma_i$ interchanges (some of the) colors from $\pi_i(I_0)$ (not
used in the level $i$) with some of the other colors, and hence,
$c(y)=\pi_{i+1}(c_0(y)) \subseteq \pi_i(c_0(y))\cup \pi_i(I_0)$.
Therefore,
\[
c(x)\cap c(y)\subseteq
\left(\pi_i(c_0(x))\cap\pi_i(c_0(y))\right)\cup\left(\pi_i(c_0(x))\cap\pi_i(I_0)\right)
=\emptyset\,.
\]

If $x$ has level $2s$ and $y$ has level $2s+1$, then $y=r$ and
$c(y)=X=\pi(c_0(r))$. On the other hand,
$c(x)=\pi_{2s}(c_0(x))=\pi(c_0(x))$. Therefore, the sets are disjoint.
To conclude, notice that
$c(x)=c_0(x)$ for all vertices and edges of level $0$
and that $c(rr')=\pi_{2s}(c_0(rr'))=\pi(c_0(rr'))=Y$.
\end{proof}

We are now ready to prove Theorem~\ref{thm-main}. 
\begin{proof}[Proof of Theorem~\ref{thm-main}]
Fix $\varepsilon'\in(0,\varepsilon)$. Let $d$ be
large enough so that Lemma~\ref{lem.exttree} holds for the values of
$\Delta$, $\varepsilon'$
and $\varepsilon$. Set $d_0:=2d+2$ and let $g$ be greater than
$(d_0+1)\cdot\Delta$ and such that Lemma~\ref{prop.1} holds for
$\Delta$ and $\varepsilon'$.

We proceed by induction on $|E(G)|$, the conclusion being trivial when
$|E(G)|\le\Delta$.
Now, if $G$ is \cec, then Lemma~\ref{prop.1} yields the result
(since $\varepsilon>\varepsilon'$).
So, we assume that $G$ is not \cec. Let $F$ be a (minimal) cyclic edge-cut
of $G$ such that $|F|<\Delta$ and $G-F$ is composed of two connected
components $A$ and $B$ such that $|B|$ is minimized.

Let $F'$ be the $d_0$-connector of $F$ in the subgraph $G_X$ of $G$ induced by
$B\cup F$. We now show that the subgraph $G_A$ of
$G$ induced by $A\cup N(F')$ has less edges than $G$.
By Lemma~\ref{lem.conn}, we know that $N(F')$ is a forest. On
the other hand, $B$ contains a cycle and hence $|E(N(F'))\setminus F|<|E(B)|$
(since $N(F')$ is contained in $G_X$ and the subgraph induced by $F$ is
acyclic by the girth requirement). Hence, $G_A$ has less edges than $G$,
maximum degree at most $\Delta$ and girth at least $g$. Therefore,
there exists a
fractional $(\Delta+1+\varepsilon)$-total coloring $c_A$ of $G_A$:
if
the maximum degree of $G_A$ is $\Delta$, then this follows from the induction
hypothesis, and otherwise it follows from Theorem~\ref{thm.kire}.

Let $G_B$ be the graph obtained from $G$ by contracting $A$ into a
single vertex $w$, and then subdividing $\lfloor g/2\rfloor$ times each edge incident with $w$;
thus the distance between $w$ and $B$ in $G_B$ is greater than $\lfloor
g/2\rfloor$.
Hence, the girth of $G_B$ is at least $g$.
Since $G_B$ contains more than $\Delta$ edges, $G_B$ is \cec, because any
cyclic edge-cut of $G_B$ yields a cyclic edge-cut of $G$ of at most the same
order, and whose removal splits $G$ into two components one of which is
smaller than $B$.
Consequently, Lemma~\ref{prop.1} ensures the existence of a
fractional $(\Delta+1+\varepsilon')$-total coloring $c_B$ of $G_B$.

Let $\mathscr{E}$ be the set of edges $xy$ of $G$ with
$x\in V(F')$ and $y\in V(B)\setminus V(F')$, i.e.,
$e$ is in $N(F')$ but not in $F'$.
For every $e=xy\in\mathscr{E}$ with $x\in V(F')$,
let $T_0(y)$ be the subgraph of $G_X-V(F')$
induced by the vertices at distance at most $d$ from $y$.
Let $T(e)$ be the graph obtained from $T_0(y)$ by adding $x$ and the
edge $e=xy$.
Observe that $T(e)$ is a tree, because the girth $g$ of $G$ is greater
than $2d+2$.
Moreover, if $e'=x'y'\in\mathscr{E}$ is distinct from $e$ (and $x'\in
V(F')$),
then $T(e)$ and $T(e')$ are vertex-disjoint unless $x=x'$ and then $x$
is the unique common vertex of $T(e)$ and $T(e')$, because $F'$ is
$d_0$-closed.

Now, for every edge $e=xy\in\mathscr{E}$, we apply
Lemma~\ref{lem.exttree} to the tree $T(e)$
with $c_0:=c_B$, $r:=x$, $X:=c_A(x)$ and $Y:=c_A(e)$. This yields a
fractional $(\Delta+1+\varepsilon)$-total coloring $c_e$ of $T(e)$,
which agrees with $c_B$ on all the leaves and
edges incident to a leaf that are at distance $d$ from $x$, if
any.

The property of disjointness of the trees ensures that $c_B$ along with
all the colorings $c_e$ for $e\in\mathscr{E}$ yield a fractional
$(\Delta+1+\varepsilon)$-total coloring $c'$ of the subgraph $B'$ of $G$
spanned by $B-F'$ and $\mathscr{E}$.

Since $G_A\cup B'=G$ and the colorings $c_A$ and $c'$ agree on all the
edges in $\mathscr{E}$ and all the vertices of $F'$ that are incident to
an edge in $\mathscr{E}$,
we obtain a fractional $(\Delta+1+\varepsilon)$-total coloring of $G$, as wanted.
\end{proof}

\bibliographystyle{plain}
\bibliography{kks09}

\end{document}